\newtheorem{prop}{Proposition}[section]
\newtheorem{theo}[prop]{Theorem}
\newtheorem{rem}[prop]{Remark}
\newenvironment{proof}
 {\begin{trivlist} \item[\hskip \labelsep {\bf Proof}\hspace*{3 mm}]}
 {\hfill$\Box$\end{trivlist}}
\begin{document}

\title{Focal set of curves in the
Minkowski space near lightlike points}
\author{Ana Claudia Nabarro\footnote{Supported by  FAPESP grant
 2013/02794-4} \, and\, Andrea de Jesus Sacramento \footnote{Supported by  FAPESP grant
 2010/20301-7} }
\date{}

\maketitle
\begin{abstract}
 We study the geometry of curves in the Minkowski space and in the de Sitter space, specially at points where the tangent direction is lightlike (i.e. has length zero) called lightlike points of the curve. We define the focal sets of these curves and study the metric structure of them. At the lightlike points, the focal set is not defined. We use  singularity theory techniques to carry out our study and investigate the focal set near lightlike points.
\end{abstract}

\renewcommand{\thefootnote}{\fnsymbol{footnote}}
\footnote[0]{2010 Mathematics Subject classification 58K05, 53D10, 53B30.}
\footnote[0]{Key Words and Phrases. Focal set, curves in the Minkowski space, curves in the de Sitter spaces, lightlike point, metric structure.}


\section{Introduction}\label{sec:intro}

The study of submanifolds in Minkowski space is of interest in relativity theory. We believe that it is important to study the geometry of submanifolds in the Minkowski space with the induced metric degenerating at some points on the submanifolds. For example, any closed (compact without boundary) surfaces in the Minkowski 3-space has an non-empty locus of points where the metric is degenerate. (We observe that there are various studies  in geometry on such submanifolds.  For example, in \cite{stellergauss} a Gauss-Bonnet type theorem is proven, and in \cite{pelletier}, the problem of how to extend the Levi-Civita connection to the locus of degeneracy of the metric is considered). For this reason, some authors started to investigate  the geometry of such submanifolds using the singularity theory. The first step was to study the cases of curves in the Minkowski plane \cite{Saloomtari} and of surfaces in the Minkowski 3-space \cite{farid}. In \cite{farid}, the authors studied the caustics of surfaces in the Minkowski 3-space. Although the focal set of the surface is not defined at points where the metric is degenerate, the caustic is. The properties of the induced metric on the caustic are studied in \cite{farid}. Submanifolds in pseudo-spheres of the Minkowski space are also studied in several papers.
In \cite{izumiyapeisanosingularities}, Izumiya-Pei-Sano defined the hyperbolic Gauss indicatrix of a hypersurface in the Minkowski space model of the hyperbolic space. The work in \cite{izumiyapeisanosingularities} set the foundations of applications of singularity theory for the extrinsic geometry of submanifolds in the hyperbolic space. The extrinsic geometry of spacelike or timelike submanifolds in other pseudo-spheres  of the Minkowski-space are investigated in subsequent papers.


In this paper, we study the geometry of curves in Minkowski $3$-space and  in pseudo-spheres $S^2_1$ and $S^3_1$.  In order to do this, we study the families of  distance square functions on the curves.  We study their focal sets and the bifurcation set of the family of the distance square functions on these curves in order to investigate what happens near the lightlike points.

The paper is organised as follows. Section \ref{sec:pre} addresses some preliminary results and notions that are used in the paper. We define an open and dense set of curves, such that
the lightlike points of a curve are isolated. Besides, given a curve in this set, passing by  lightlike points, the curve changes from spacelike to timelike.

 We consider in $\S\ref{sec:focalst}$ spacelike and timelike curves $\gamma$ using the Frenet-Serret formulae. These formulae and the family of distance squared functions on $\gamma$ are the main tools in this section.  Here  we study the geometry and metric structure of the focal set of $\gamma$.

In Section \ref{sec:fslig}, we study the bifurcation set of the family of distance squared functions on $\gamma$ in the neighborhood of lightlike points of the curve. In this case, we cannot parametrise the curve by arc length, therefore we cannot  use the Frenet-Serret formulae as in $\S\ref{sec:focalst}$.

 In $\S\ref{sec:focals21}$ and $\S\ref{sec:focals31}$, we consider curves, the focal sets and the bifurcation sets  in the pseudo-spheres $S^2_1$ and $S^3_1$. We study the metric structures of these sets  locally at lightlike points of $\gamma$.

\section{Preliminaries}\label{sec:pre}
 The \emph{Minkowski space} $\mathbb{R}^{n+1}_1$ is the vector space $\mathbb{R}^{n+1}$ endowed with the pseudo-scalar product $\langle x,y\rangle=-x_1y_1 + x_2y_2 +\ldots+ x_{n+1}y_{n+1},$ for any $x=(x_1,x_2,\ldots,x_{n+1})$ and $y=(y_1,y_2,\ldots,y_{n+1})$ in $\mathbb{R}^{n+1}$. We say that a non-zero vector $x\in\mathbb{R}^{n+1}_1$ is \emph{spacelike} if $\langle x,x\rangle > 0$, \emph{lightlike} if $\langle x,x\rangle = 0$ and
 \emph{ timelike} if $\langle x,x\rangle < 0$.
The norm of a vector $x\in\mathbb{R}^{n+1}_1$ is defined by $\parallel x\parallel=\sqrt{\mid\langle x,x\rangle\mid}$. This is an example of the Lorentzian metric. In $\mathbb{R}^{3}_1$, the pseudo vector product of $x=(x_1,x_2,x_3)$ and $y=(y_1,y_2,y_3)$ is:
 $$x\wedge y=\left|\begin{array}{ccc}
                                                                                        -e_1 & e_2 & e_3 \\
                                                                                        x_1 & x_2 & x_3 \\
                                                                                        y_1 & y_2 & y_3
                                                                                      \end{array}\right|,$$
where $\{ e_1, e_2, e_3\}$ is the standard basis of $\mathbb{R}^{3}$. For basic concepts and details of properties, see \cite{Ratcliffe}.

We have the following pseudo-spheres in $\mathbb{R}^{n+1}_1$ with centre $0$ and radius $r>0,$
 $$H^{n}(-r)=\{x\in\mathbb{R}^{n+1}_1\,\,|\,\,\langle x,x\rangle=-r^{2}\},\,\,\hbox{called \emph{Hyperbolic $n$-space}};$$
 $$S^{n}_1(r)=\{x\in\mathbb{R}^{n+1}_1\,\,|\,\,\langle x,x\rangle=r^2\},\,\,\hbox{called \emph{de Sitter $n$-space}};$$
 $$LC^{\ast}=\{x\in\mathbb{R}^{n+1}_1\setminus\{0\}\,\,|\,\,\langle x,x\rangle=0\},\,\,\hbox{called \emph{Lightcone}}.$$
 Instead of $S^{n}_1(1)$, we usually write $S^{n}_1$.

Let V be a vector subspace of $\mathbb{R}^{n+1}_1$. Then we say that V is  \emph{timelike} if and only if V has a timelike vector, \emph{spacelike} if and only if every non-zero vector in V is spacelike, or \emph{lightlike} otherwise. For a non-zero vector $v\in\mathbb{R}^{n+1}_1$ and a real number $c$, we define a \emph{hyperplane} with \emph{normal} $v$ by $$P(v,c)=\{x\in\mathbb{R}^{n+1}_1\,\,|\,\,\langle x,v\rangle=c\}.$$ We call $P(v,c)$ a spacelike hyperplane, a timelike hyperplane or lightlike hyperplane if $v$ is timelike, spacelike or lightlike, respectively.

We consider embeddings $\gamma: I\rightarrow \mathbb{R}^{n}_1$, where $I$ is an open interval of $\mathbb{R}$. The set $Emb(I,\mathbb{R}^{n}_1)$ of such embeddings is endowed with the Whitney $C^{\infty}$-topology. We say that a property is \emph{generic } if it is satisfied by curves in a residual subset of $Emb(I,\mathbb{R}^{n}_1)$.

We say that $\gamma$ is \emph{spacelike} (resp. \emph{timelike}) if $\gamma'(t)$ is a \emph{spacelike }(resp. \emph{timelike}) vector for all $t\in I$. A point $\gamma(t)$ is called a \emph{lightlike point} if $\gamma'(t)$ is a \emph{lightlike vector}.

As in \cite{Saloomtari} for plane curves, we define the subset $\Omega$ of $Emb(I, \mathbb{R}^{n}_1)$ such that a curve $\gamma$ is in $\Omega$ if and only if $\langle\gamma''(t),\gamma'(t)\rangle\neq 0$ whenever $\langle\gamma'(t),\gamma'(t)\rangle= 0$. One can show, using Thom's transversality results (see for example \cite{bruce}, Chapter 9), that $\Omega$ is a residual subset of $Emb(I,\mathbb{R}^{n}_1)$.
\begin{prop}\label{pro:isolados}
Let $\gamma\in\Omega$. Then the lightlike points of $\gamma$ are isolated points.
\end{prop}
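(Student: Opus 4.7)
The plan is to encode the lightlike condition as a scalar equation and show the regularity hypothesis built into $\Omega$ forces simple zeros. Define the smooth function
\[
f(t)=\langle\gamma'(t),\gamma'(t)\rangle,\qquad t\in I.
\]
The lightlike points of $\gamma$ are, by definition, precisely the zeros of $f$, so the claim reduces to showing that under the $\Omega$-condition every zero of $f$ is isolated.

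Differentiating, I would compute
\[
f'(t)=2\langle\gamma''(t),\gamma'(t)\rangle.
\]
If $t_{0}$ is a lightlike point, then $f(t_{0})=\langle\gamma'(t_{0}),\gamma'(t_{0})\rangle=0$, so the hypothesis $\gamma\in\Omega$ gives $\langle\gamma''(t_{0}),\gamma'(t_{0})\rangle\neq 0$, i.e.\ $f'(t_{0})\neq 0$.

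From here it is immediate: a $C^{1}$ function of one real variable vanishing at $t_{0}$ with nonzero derivative is strictly monotone on some neighbourhood of $t_{0}$ (this follows from the mean value theorem, or equivalently from the inverse function theorem applied to $f$ at $t_{0}$), hence $t_{0}$ is the only zero of $f$ in that neighbourhood. Therefore $t_{0}$ is an isolated lightlike point of $\gamma$, and since this holds for every lightlike point, the set of lightlike points is discrete in $I$.

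There is no real obstacle here: the whole content of the proposition is that the definition of $\Omega$ was designed precisely to force the zeros of $f$ to be simple, and simple zeros of a smooth real function are automatically isolated. The only thing to watch is that the argument uses only $\gamma\in C^{2}$, which is guaranteed because $\gamma$ is a smooth embedding.
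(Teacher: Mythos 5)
Your proof is correct and is essentially the same argument as the paper's: the paper defers to the case $n=2$ in \cite{Saloomtari}, where the proposition is proved exactly by observing that the lightlike points are the zeros of $f(t)=\langle\gamma'(t),\gamma'(t)\rangle$, that $f'(t)=2\langle\gamma''(t),\gamma'(t)\rangle\neq 0$ at such zeros by the definition of $\Omega$, and that simple zeros of a $C^1$ function are isolated. Nothing further is needed.
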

\begin{proof}
The proof is similar to the case $n=2$ given in \cite{Saloomtari}.
\end{proof}
We observe that if the curve $\gamma\in\Omega$ then at a lightlike point $\gamma(t_0)$, the curve changes from a spacelike curve to a timelike curve or vice-versa.

To study the local properties of $\gamma$ at $\gamma(t_0)$, we
use the germ $\gamma:\mathbb{R},t_0\rightarrow \mathbb{R}^{3}_1$ of $\gamma$ at $t_0$.
The family of distance squared functions  $f:I\times\mathbb{R}^{3}_1\rightarrow\mathbb{R}$ on $\gamma$ is given by $$f(t,v)=\langle\gamma(t)-v,\gamma(t)-v\rangle.$$
We denote by  $f_v:I\rightarrow\mathbb{R}$ the function  $f_v(t)=f(t,v)$, for any fixed $v\in \mathbb{R}^{3}_1$.

The distance squared function $f_v$ has singularity of type $A_k$ at $t_0$ if the derivatives $(f_v)^{(p)}(t_0)=0$ for all $1\leq p\leq k$, and $(f_v)^{(k+1)}(t_0)\neq 0 $. We also say that $f_v$ has singularity of type $A_{\geq k}$ at $t_0$ if $(f_v)^{(p)}(t_0)=0$ for all $1\leq p\leq k$. This is valid including if $\gamma(t_0)$ is a lightlike point of the curve. Now let $F:\mathbb{R}^{3}_1\rightarrow \mathbb{R}$ be a submersion and $\gamma:I\rightarrow \mathbb{R}^{3}_1$ be a regular curve. We say that $\gamma$ and $F^{-1}(0)$ have contact of order $k$ or $k$-point contact at $t=t_0$ if the function  $g(t)=F\circ\gamma(t)$ satisfies $g(t_0)=g'(t_0)=\cdots=g^{(k)}(t_0)=0$ and $g^{(k+1)}(t_0)\neq0$, i.e, if $g$ has singularity $A_{k}$ at $t_0$. Then the singularity type of $f_v$ at $t_0$ measures the contact of $\gamma$ at $\gamma(t_0)$ with the pseudo-sphere of centre $v$ and radius $\parallel \gamma(t_0)-v\parallel$. The type of pseudo-sphere is determined by the sign of $\langle\gamma(t_0)-v,\gamma(t_0)-v\rangle$. For a generic curve in $\mathbb{R}^{3}_1$, $f_v$ has local singularities of type $A_1$, $A_2$, $A_3$ or $A_4$ (see \cite{peisano}), and the singularities $A_4$ occur at isolated points of the curve. If $f_{v_0}$ has an $A_k$-singularity ($k=2,3,4$) at $t_0$, then $f$ is a $(p)$-versal unfolding of $f_{v_0}$ \cite{peisano}.
The \emph{bifurcation set} of $f$ is given by

$$\mathfrak{Bif}(f)=\{v\in \mathbb{R}^{3}_1\,\,|\,\,f_v'(t)=f_v''(t)=0\,\,\mbox{in}\,\,(t,v)\,\,\mbox{for some}\,\,t\},$$ i.e., the directions where $f_v$ at $t$  has a degenerate (non-stable) singularity, that is, the singularity is of type  $A_{\geq2}$. It is defined even when the point is a lightlike point of $\gamma$.

The\emph{ focal set} of $\gamma$, for  spacelike or timelike curves, is the locus of centres of pseudo-spheres that has at least a $2$-point contact with the curve. Therefore, the $\mathfrak{Bif}(f)$ and the focal set of $\gamma$ coincide for  spacelike and timelike curves.

We have  a fundamental result of the unfolding theory:

\begin{theo}{\cite{bruce}} \label{teo:bruce} Let $G:(\mathbb{R}\times\mathbb{R}^3,(t_0,v_0))\rightarrow \mathbb{R}$ be a $3$-parameter unfolding of $g(t)$ which has an $A_k$-singularity at $t_0$. Suppose that $G$ is a $(p)$-versal unfolding, then $\mathfrak{Bif}(G)$ is locally diffeomorphic to
\begin{itemize}
  \item [(a)] $\mathbb{R}^2$, if $k=2$;
  \item [(b)] cuspidal edge $C\times\mathbb{R}$, if $k=3$;
  \item [(c)] swallowtail $SW$, if $k=4$,
\end{itemize}

\noindent where $C=\{(x_{1}, x_{2})\,|\,x_{1}^{2}=x_{2}^{3}\}$ is the ordinary cusp and $SW=\{(x_{1}, x_{2}, x_{3})\,|\,x_{1}=3u^{4}+u^{2}v,$ $x_{2}=4u^{3}+2uv,$ $x_{3}=v\}$ is the swallowtail (see \cite{bruce} for figure of the  SW surface).
\end{theo}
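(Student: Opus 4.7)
The plan is to reduce $G$ to an explicit polynomial normal form via the versal unfolding theorem, and then to read $\mathfrak{Bif}$ off that normal form by elementary elimination. Since $G$ is a $(p)$-versal $3$-parameter unfolding of a germ with an $A_k$-singularity whose codimension $k-1$ is at most $3$, the versality theorem produces a local fibre-preserving diffeomorphism $(t,v)\mapsto(\psi(t,v),\Phi(v))$ with $\Phi$ a diffeomorphism of the parameter space, together with a smooth function $c(v)$, such that
\[
G(\psi(t,v),v)=F_k(t,\Phi(v))+c(v),\qquad F_k(t,u)=t^{k+1}+\sum_{i=1}^{k-1}u_i\,t^i,
\]
where the remaining $4-k$ coordinates of $u$ act on $F_k$ trivially. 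Because the fibre change preserves the location and order of degeneracy of each critical point and $c(v)$ is a pure shift, $\Phi$ identifies $\mathfrak{Bif}(G)$ with $\mathfrak{Bif}(F_k)$. It therefore suffices to describe $\mathfrak{Bif}(F_k)$ in each of the three cases.

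For each $k$ I would eliminate $t$ from the simultaneous system $\partial_t F_k=0=\partial_t^2 F_k$. For $k=2$ this forces $t=u_1=0$, so $\mathfrak{Bif}(F_2)=\{u_1=0\}\cong\mathbb{R}^2$. For $k=3$ one obtains the parametrisation $(u_1,u_2)=(8t^3,-6t^2)$; the image satisfies $27u_1^2+8u_2^3=0$, which after a linear rescaling becomes $x_1^2=x_2^3$, so together with the trivial parameter $u_3$ one obtains $\mathfrak{Bif}(F_3)\cong C\times\mathbb{R}$. For $k=4$ the analogous elimination yields $(u_1,u_2,u_3)=(15t^4+3u_3 t^2,\,-10t^3-3u_3 t,\,u_3)$; a direct linear rescaling (with $u=t$ and $v$ a scalar multiple of $u_3$) identifies this parametrisation with the standard swallowtail parametrisation given in the statement, so $\mathfrak{Bif}(F_4)\cong SW$.

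The substantive step is the versal unfolding theorem itself. The hard part is upgrading the infinitesimal $(p)$-versality condition---that the span of $\{\partial G/\partial v_i(t_0,v_0)\}_{i=1}^{3}$ together with the Jacobian ideal of $g$ equals the local algebra of $g$ modulo constants---to the genuine fibre-preserving diffeomorphism above. This is achieved by a Malgrange--Mather preparation / homotopy argument as in \cite{bruce}. Granted that theorem, the remaining work is just the eliminations and linear rescalings above, for which the only verification needed is that the parametrisations $(t,\cdot)\mapsto(u_1,u_2,u_3)$ are immersions off the obvious singular strata in the parameter space.
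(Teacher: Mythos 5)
The paper gives no proof of this theorem: it is quoted as a known result from \cite{bruce}, so there is no internal argument to compare against. Your proposal is a correct reconstruction of the standard proof in that reference: $(p)$-versality reduces $G$, up to a fibre-preserving diffeomorphism and an additive $c(v)$ (neither of which alters where $\partial_t$ and $\partial_t^2$ vanish), to the miniversal unfolding $t^{k+1}+\sum_{i=1}^{k-1}u_it^i$ trivially suspended in the remaining $4-k$ parameters, and your eliminations check out --- $(u_1,u_2)=(8t^3,-6t^2)$ with $27u_1^2+8u_2^3=0$ for $k=3$, and $(u_1,u_2,u_3)=(15t^4+3u_3t^2,\,-10t^3-3u_3t,\,u_3)$ for $k=4$, which the rescaling $(x_1,x_2,x_3)=(u_1/5,\,-2u_2/5,\,3u_3/5)$ with $u=t$, $v=3u_3/5$ identifies with the stated swallowtail parametrisation.
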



\section{The focal sets of spacelike and timelike curves}\label{sec:focalst}
Let $\gamma: I\rightarrow \mathbb{R}^{3}_1$ be a spacelike or a timelike curve and suppose that it is parametrised by arc length. This is possible because $\gamma$ has no lightlike points.

In this section, we remember  the Frenet-Serret formulae of  $\gamma$  and we find the parametrisation of their focal surfaces. Furthermore, we study the metric structure of these focal surfaces.

 We denote by $t$ the unit tangent vector to $\gamma$. Let $n$ be the unit normal vector to $\gamma$ given by $\gamma''(s)=k(s)n(s)$, where $k(s)=\|\gamma''(s)\|$ is defined as being the curvature of $\gamma$ at $s$, and $b(s)=t(s)\wedge n(s)$ the unit binormal vector to $\gamma(s)$. Then, we have the orthonormal basis $\{t(s),n(s), b(s)\}$ of $\mathbb{R}^3_1$ along $\gamma$. Using exactly  the same arguments as the case for a curve in an Euclidian 3-space, we have the following Frenet-Serret formulae (see \cite{izumyiatakiyama}, \cite{izumiya}):

 $$\left\{
   \begin{aligned}
     t'(s) & =k(s)\,n(s) \\
      n'(s) &=-\varepsilon(\gamma(s))\,\delta(\gamma(s))\,k(s)\,t(s) + \varepsilon(\gamma(s))\,\tau(s)\,b(s) \\
     b'(s) &=\tau(s)\,n(s)
   \end{aligned}
 \right.,$$

\noindent with $\tau(s)$ being the torsion of  $\gamma$ at $s$, $\varepsilon(\gamma(s))=sign(t(s))$, $\delta(\gamma(s))=sign(n(s))$, where $sign(v)$ is $1$ if the vector $v$ is spacelike or $-1$ if the vector $v$ is timelike. We call them, $\varepsilon$ and $\delta$ for short.

 Observe that if $\gamma$ is a spacelike or a timelike curve and $k(s)=0$ for some $s\in I$, then $f''_v(s)=\varepsilon(\gamma(s))\neq0$ and  there is no singularity $A_{\geq2}$.
Now if $\tau(s)=~0$ for some $s\in I$, then generically $f^{(3)}_v(s)=-\varepsilon(\gamma(s))k'(s)\neq0$, that is, there is no singularity $A_{\geq3}$. This is the reason for which $k(s)\neq0$ and $\tau(s)\neq0$ in the following proposition.

\begin{prop}{\cite{peisano}}\label{prop:peisano}
Let $\gamma:I\rightarrow\mathbb{R}^{3}_1$ be a spacelike or a timelike curve parametrised by arc length, with $k(s)\neq0$ and $\tau(s)\neq0$. Then
\begin{itemize}
  \item [(1)] $f_v'(s_0)=0$ if and only if there exist $\lambda$, $\mu$ $\in \mathbb{R}$ such that $\gamma(s_0)-v=\lambda n(s_0)+\mu b(s_0)$.
  \item [(2)] $f_v'(s_0)=f_v''(s_0)=0$  if and only if $v=\gamma(s_0)+\dfrac{\varepsilon(\gamma(s_0))}{\delta(\gamma(s_0))k(s_0)}n(s_0) + \mu b(s_0)$ for some $\mu \in \mathbb{R}$.
  \item [(3)] $f_v'(s_0)=f_v''(s_0)=f_v^{(3)}(s_0)=0$ if and only if

  $$v=\gamma(s_0)+\frac{\varepsilon(\gamma(s_0))}{\delta(\gamma(s_0))k(s_0)}n(s_0) + \frac{k'(s_0)}{\varepsilon(\gamma(s_0))\delta(\gamma(s_0))k^{2}(s_0)\tau(s_0)}b(s_0).$$
\end{itemize}
\end{prop}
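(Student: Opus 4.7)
The strategy is a direct computation of the first three derivatives of $f_v$ using the chain rule together with the Frenet--Serret formulae, then reading off each condition by decomposing $\gamma(s_0)-v$ in the orthonormal frame $\{t(s_0),n(s_0),b(s_0)\}$. The only subtlety is to keep track of the signs $\varepsilon=\langle t,t\rangle$, $\delta=\langle n,n\rangle$, and $\langle b,b\rangle=-\varepsilon\delta$, which come from $\{t,n,b\}$ being a Lorentz-orthonormal frame in $\mathbb{R}^3_1$.

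First I would differentiate $f_v(s)=\langle\gamma(s)-v,\gamma(s)-v\rangle$ once to get $f_v'(s)=2\langle t(s),\gamma(s)-v\rangle$. Since $\{t,n,b\}$ is a basis, $f_v'(s_0)=0$ is equivalent to saying that the $t$-component of $\gamma(s_0)-v$ (with respect to this basis) vanishes, giving statement (1) with $\lambda=\delta\langle\gamma(s_0)-v,n(s_0)\rangle$ and $\mu=-\varepsilon\delta\langle\gamma(s_0)-v,b(s_0)\rangle$.

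Next, differentiating once more and using $t'=kn$, I obtain
\[
f_v''(s)=2k(s)\langle n(s),\gamma(s)-v\rangle+2\varepsilon.
\]
Substituting the decomposition $\gamma(s_0)-v=\lambda n(s_0)+\mu b(s_0)$ from (1) gives $\langle n(s_0),\gamma(s_0)-v\rangle=\lambda\delta$, so $f_v''(s_0)=0$ forces $\lambda=-\varepsilon/(\delta k(s_0))$. Re-writing $v=\gamma(s_0)-\lambda n(s_0)-\mu b(s_0)$ and renaming $-\mu\mapsto\mu$ (which is harmless since $\mu$ is arbitrary) yields (2).

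For (3), I differentiate once more, using $n'=-\varepsilon\delta k\, t+\varepsilon\tau\, b$ and $\langle n,t\rangle=0$, to get
\[
f_v'''(s)=2k'(s)\langle n(s),\gamma(s)-v\rangle+2k(s)\bigl(-\varepsilon\delta k(s)\langle t(s),\gamma(s)-v\rangle+\varepsilon\tau(s)\langle b(s),\gamma(s)-v\rangle\bigr).
\]
At $s_0$ the $t$-pairing vanishes by (1), and from (2) the vector $\gamma(s_0)-v$ equals $-(\varepsilon/(\delta k))n(s_0)-\mu b(s_0)$ (in the original $\mu$ convention), so $\langle n,\gamma-v\rangle=-\varepsilon/k$ and $\langle b,\gamma-v\rangle=\varepsilon\delta\mu$. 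Setting $f_v'''(s_0)=0$ becomes a linear equation in $\mu$ which is solvable whenever $k(s_0)\tau(s_0)\neq 0$, and (after the same sign convention as in (2)) solving for $\mu$ gives precisely the coefficient $k'(s_0)/(\varepsilon\delta k^2(s_0)\tau(s_0))$ announced in the statement.

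The computation has no essential obstacle beyond bookkeeping of the $\varepsilon,\delta$ signs; the main point to be careful about is that the coefficient of $b$ in the expansion of $\gamma(s_0)-v$ is not simply $\langle\gamma(s_0)-v,b(s_0)\rangle$ but is multiplied by $\langle b,b\rangle^{-1}=-\varepsilon\delta$, and that the sign flip when passing from $\gamma(s_0)-v$ to $v$ must be reabsorbed into the free parameter $\mu$ in (2).
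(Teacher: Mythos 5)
Your proposal is correct, and it follows essentially the same route as the source: the paper itself gives no proof of this proposition, citing it directly from \cite{peisano}, where the argument is precisely this direct computation of $f_v'$, $f_v''$, $f_v^{(3)}$ via the Frenet--Serret formulae and decomposition of $\gamma(s_0)-v$ in the frame $\{t,n,b\}$. Your sign bookkeeping checks out, in particular $\langle b,b\rangle=-\varepsilon\delta$ and the final coefficient $k'(s_0)/(\varepsilon\delta k^{2}(s_0)\tau(s_0))$; the only slip is the parenthetical label in step (3), where the expression $-(\varepsilon/(\delta k))n(s_0)-\mu b(s_0)$ is written in the renamed convention of (2) rather than the ``original $\mu$ convention,'' but this does not affect the result.
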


Thus, for a spacelike or timelike curve $\gamma$ parametrised by arc length with $k(s)\neq0$, we have that the \emph{focal surface of $\gamma$} is given by

\begin{equation}\label{eq:bifr31}
\mathfrak{B}(s,\mu)=\gamma(s)+\frac{\varepsilon(\gamma(s))}{\delta(\gamma(s))k(s)}n(s) + \mu b(s),
\end{equation}

\noindent with $\mu \in\mathbb{R}$. The \emph{cuspidal curve} of the focal surface is given by
\begin{equation}\label{eq:cuspidalcurve}
\mathfrak{B}(s)=\gamma(s)+\frac{\varepsilon(\gamma(s))}{\delta(\gamma(s))k(s)}n(s) + \mu(s)b(s),
\end{equation}
 with $\mu(s)=\dfrac{k'(s)}{\varepsilon(\gamma(s))\delta(\gamma(s))k^{2}(s)\tau(s)}$, that is, where the distance squared function has singularity $A_{\geq3}$. We denote the cuspidal curve $\mathfrak{B}(s)$ by $\mathcal{C}$.

 We observe that the focal surface is a developable surface (for more details see \cite{izumiyadevelopable}).

\begin{prop}\label{prop:curvatimeesf} Let $\gamma$ be a connected timelike curve,
 then $\gamma$ does not intersect its focal surface.

\begin{proof}

 Suppose that $\gamma$ is timelike and intersects its focal surface, then there exists $s_1, s_2\in I$ with $s_1\neq s_2$ (for simplicity suppose that $s_2<s_1$) such that,
$$\gamma(s_1)-\frac{1}{k(s_1)}n(s_1)+\mu b(s_1)= \gamma(s_2).$$

Consider the function  $g:[s_2,s_1]\rightarrow \mathbb{R}$ given by  $g(s)=\langle \gamma(s), \gamma'(s_1)\rangle-\langle \gamma(s_1), \gamma'(s_1)\rangle$. Thus  $g(s_1)=g(s_2)=0$ and therefore by the Rolle's theorem exists $s_3\in(s_2,s_1)$ such that  $g'(s_3)=0$. Since $g'(s_3)=\langle \gamma'(s_1), \gamma'(s_3)\rangle$ we have that  $\gamma'(s_3)$ belongs to a plane generated by  $n(s_1)$ and $b(s_1)$. But  this is a
contradiction because this plane is spacelike and contains  $\gamma'(s_3)$ that is  timelike. Therefore, $\gamma$ does not intersect its focal surface.
\end{proof}
 \end{prop}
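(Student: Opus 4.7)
The plan is to argue by contradiction and to exploit Rolle's theorem applied to a well-chosen scalar function on the interval between the two parameter values where the intersection occurs. The key geometric fact I will use is that for a timelike curve the normal plane $\mathrm{span}\{n(s),b(s)\}$ is spacelike (being the orthogonal complement of the timelike tangent $t(s)$), so any difference vector lying in such a normal plane is spacelike and, in particular, orthogonal to $t(s)$.

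Concretely, suppose by contradiction that there exist $s_{1}\neq s_{2}$, say $s_{2}<s_{1}$, and $\mu\in\mathbb{R}$ with
\[
\gamma(s_{2})=\gamma(s_{1})+\frac{\varepsilon(\gamma(s_{1}))}{\delta(\gamma(s_{1}))k(s_{1})}n(s_{1})+\mu\,b(s_{1}).
\]
Since $\gamma$ is timelike, $t(s_{1})=\gamma'(s_{1})$ is timelike and $\{n(s_{1}),b(s_{1})\}$ spans a spacelike plane orthogonal to $\gamma'(s_{1})$. Hence $\gamma(s_{2})-\gamma(s_{1})$ is orthogonal to $\gamma'(s_{1})$. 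Define $g\colon[s_{2},s_{1}]\to\mathbb{R}$ by $g(s)=\langle\gamma(s)-\gamma(s_{1}),\gamma'(s_{1})\rangle$. Then $g(s_{1})=0$ trivially and $g(s_{2})=0$ by the orthogonality just noted, so Rolle's theorem yields $s_{3}\in(s_{2},s_{1})$ with $g'(s_{3})=\langle\gamma'(s_{3}),\gamma'(s_{1})\rangle=0$.

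The final step is to derive the contradiction: $\gamma'(s_{3})$ is orthogonal to the timelike vector $\gamma'(s_{1})$, so it must lie in the spacelike orthogonal complement of $\mathbb{R}\cdot\gamma'(s_{1})$; this forces $\gamma'(s_{3})$ to be spacelike (or zero), contradicting the hypothesis that $\gamma$ is timelike on the entire interval. Connectedness of $\gamma$ is used implicitly to guarantee that both $s_{1}$ and $s_{2}$, and therefore the intermediate $s_{3}$, lie in the same interval of definition so that $\gamma'(s_{3})$ is still a timelike vector of the curve.

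The main conceptual step, rather than any computational obstacle, is spotting the scalar auxiliary function $g$: once one notices that the defining equation of the focal surface places $\gamma(s_{1})-\gamma(s_{2})$ inside the normal plane of $\gamma$ at $s_{1}$, the natural linear functional to test is $\langle\,\cdot\,,\gamma'(s_{1})\rangle$, and Rolle's theorem closes the argument. A minor care point is verifying the sign bookkeeping $\varepsilon(\gamma(s_{1}))=-1$ and $\delta(\gamma(s_{1}))=1$ for a timelike curve, which ensures the normal coefficient in the focal parametrisation is well defined and that the normal plane is indeed spacelike; no generic or nondegeneracy assumption beyond $k\neq 0$ on the relevant parameter values is needed.
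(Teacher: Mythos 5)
Your proof is correct and takes essentially the same approach as the paper's: the identical auxiliary function $g(s)=\langle\gamma(s)-\gamma(s_1),\gamma'(s_1)\rangle$, the same application of Rolle's theorem to produce $s_3$ with $\langle\gamma'(s_3),\gamma'(s_1)\rangle=0$, and the same contradiction from the fact that the orthogonal complement of the timelike vector $\gamma'(s_1)$ is spacelike. The only cosmetic difference is that you keep the coefficients $\varepsilon,\delta$ general before substituting $\varepsilon=-1$, $\delta=1$, whereas the paper writes $-\tfrac{1}{k(s_1)}$ from the start.
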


\begin{rem}\label{rem:curvatimeesf}
If $\gamma$ is spacelike and $\gamma$ intersects its focal surface, then generically its occurs at isolated points because  this is given by a generic transversality condition.
\end{rem}

 To study the  metric structure of the focal surface $\mathfrak{B}$, we need some concepts. A \emph{spacelike surface} is a surface for which the tangent plane, at any point, is a spacelike plane (i.e., consists only of spacelike vectors).
A \emph{timelike surface} is a surface for which the tangent plane, at any point, is a timelike plane (i.e., consists of  spacelike, timelike and lightlike vectors).

The pseudo scalar product in $\mathbb{R}^{3}_1$ induces a metric on the focal surface $\mathfrak{B}$ that may be degenerated at some points of $\mathfrak{B}$. This means that the tangent planes to $\mathfrak{B}$ are lightlike at these points.  We label the locus of such points the \emph{Locus of Degeneracy} and we denote  it by \emph{LD} (see \cite{farid} for Locus of Degeneracy of caustics of surfaces in $\mathbb{R}^3_1$). The LD of $\mathfrak{B}$ may be empty (Theorem \ref{curvatime}, \emph{(d)}) or a smooth curve (Proposition \ref{LD}) that splits the focal surface $\mathfrak{B}$ locally into a Riemannian (where the tangent planes are spacelike) and a Lorentzian region (where the tangent planes are timelike). It is interesting to study what happens at points where the metric is degenerate and explain the changes in the geometry from a Riemannian region to a Lorentzian region of the submanifold (see $\S\ref{sec:fslig}$). Furthermore, the focal surface can have points where the tangent plane is not defined.

Consider the focal surface of a spacelike or a timelike curve $\gamma$ , that is,

$$\mathfrak{B}(s,\mu)=\gamma(s)+\frac{\varepsilon(\gamma(s))}{\delta(\gamma(s))k(s)}n(s) + \mu b(s),\,\,\,\mu\in\mathbb{R}.$$

 Observe that $\mathfrak{B}_s=\dfrac{\partial \mathfrak{B}}{\partial s}(s,\mu)$ is parallel to $\mathfrak{B}_\mu=\dfrac{\partial \mathfrak{B}}{\partial \mu}(s,\mu)$ if and only if $$\mu(s)=\dfrac{k'(s)}{\varepsilon(\gamma(s))\delta(\gamma(s))k^2(s)\tau(s)},$$ and  $\mathfrak{B}(s,\mu(s))$ is the parametrisation of the curve where $f_v$ has  singularities of type $A_{\geq3}$, that is the cuspidal curve $\mathcal{C}$.

   Supposing  $$\mu(s)\neq\frac{k'(s)}{\varepsilon(\gamma(s))\delta(\gamma(s))k^2(s)\tau(s)},$$ then $\mathfrak{B}_s$ and $\mathfrak{B}_\mu$ generate the tangent planes of the surface $\mathfrak{B}$, and for $v=\lambda_1\mathfrak{B}_s +\lambda_2\mathfrak{B}_\mu$,
 \small{$$\langle v,v\rangle=\lambda_1^2\left(\dfrac{\tau^2}{k^2}\langle b,b\rangle+\dfrac{k'^2\delta}{k^4}-2\dfrac{\varepsilon k'\mu\tau}{k^2}+\mu^2\tau^2\delta\right)+2\lambda_1\lambda_2\left(\dfrac{\tau}{\delta k}\langle b,b\rangle\right)+\lambda_2^2\langle b,b\rangle.$$}
 We use this expression in the following theorem.

The above calculations show the item \emph{(a)} of the next result, that is, the tangent plane of the focal surface is not defined only at the points of the cuspidal curve $\mathcal{C}$ given by Equation \eqref{eq:cuspidalcurve}.




\begin{theo}\label{curvatime}
(a) Only at the points  of the cuspidal curve $\mathcal{C}$, the tangent planes of the focal surface are not defined.

Away from the cuspidal curve $\mathcal{C}$:
\begin{itemize}
 \item[(b)] the focal surface of a timelike generic curve  is spacelike;

 \item[(c)] the focal surface of a spacelike generic curve  is timelike;

 \item[(d)] if the curve is spacelike and timelike, then the LD set of the focal surface is empty.
\end{itemize}

\begin{proof}
(b) Let $\gamma$ be a timelike curve, then $n(s)$ and $b(s)$ are spacelike. Therefore,  $v=\lambda_1\mathfrak{B}_s +\lambda_2\mathfrak{B}_\mu$ are vectors on the tangent plane and
$$\langle v,v\rangle=\lambda_1^2\left(\left(\frac{k'}{k^2}+\mu\tau\right)^2+\frac{\tau^2}{k^2}\right)(s)+2\lambda_1\lambda_2\left(\frac{\tau}{k}\right)(s)+\lambda_2^2.\hspace{0.5cm}(\ast)$$

 Making $\langle v,v\rangle=0$, we can think in the above equation as a quadratic equation of $\lambda_1$, thus $\Delta=-4\lambda_2^2\left(\dfrac{k'}{k^2}+\mu\tau\right)^2(s)\leq0.$

Since $\mu(s)\neq-\dfrac{k'}{k^2\tau}(s)$ at the regular points of the focal surface, then $\Delta=0\Leftrightarrow\lambda_2=0$. Replacing $\lambda_2=0$ in $(\ast)$, we have a lightlike direction if $\tau(s)=0$ and $k'(s)=0$, but we are supposing $\tau(s)\neq0$ (see Proposition \ref{prop:peisano}), so $\Delta< 0$. Thus, we do not have lightlike directions in this plane, and therefore the tangent planes are spacelike.

(c)  Let $\gamma$ be a spacelike curve, then we have two cases: $n(s)$ timelike and $b(s)$ spacelike or $n(s)$ spacelike and $b(s)$ timelike.

In the case where $n(s)$ is timelike and $b(s)$ is spacelike, we have
$$\langle v,v\rangle=\lambda_1^2\left(\frac{\tau^2}{k^2}-\left(\frac{k'}{k^2}+\mu\tau\right)^2\right)(s)-2\lambda_1\lambda_2\left(\frac{\tau}{k}\right)(s)+\lambda_2^2.\hspace{0.5cm} (\ast\ast)$$
 Similar to (b), we obtain $ \Delta\geq0$ and at the regular points $\Delta=0\Leftrightarrow\lambda_2=0$. Replacing $\lambda_2=0$ in $(\ast\ast)$ we have a lightlike direction if $\left(\dfrac{\tau^2}{k^2}-\left(\dfrac{k'}{k^2}+\mu\tau\right)^2\right)(s)=0$, i.e.,   $\mu(s)=\mu_1(s)=\left(\dfrac{1}{k}-\dfrac{k'}{k^2\tau}\right)(s)$ or   $\mu(s)=\mu_2(s)=\left(-\dfrac{1}{k}-\dfrac{k'}{k^2\tau}\right)(s)$. Then

$$\mathfrak{B}_s(s,\mu_1(s))=\dfrac{\tau(s)}{k(s)}(n(s)-b(s))\,\,\mbox{and}\,\,\mathfrak{B}_s(s,\mu_2(s))=-\dfrac{\tau(s)}{k(s)}(n(s)+b(s))$$ are linearly independent lightlike vectors and $\mathfrak{B}_\mu(s,\mu_1(s))=\mathfrak{B}_\mu(s,\mu_2(s))=b(s)$. On the other hand, we have
$$\mathfrak{B}_s(s,\mu_2(s))=-\mathfrak{B}_s(s,\mu_1(s))-\dfrac{2\tau(s)}{k(s)}\mathfrak{B}_\mu(s,\mu_1(s)),$$ i.e., $\mathfrak{B}_s(s,\mu_2(s))$ belongs to the plane generated by $\mathfrak{B}_s(s,\mu_1(s))$ and $\mathfrak{B}_\mu(s,\mu_1(s))$. Similarly, the vector $\mathfrak{B}_s(s,\mu_1(s))$ is in the plane generated by $\mathfrak{B}_s(s,\mu_2(s))$ and $\mathfrak{B}_\mu(s,\mu_2(s))$.  Therefore if $\lambda_2=0$, the tangent planes at $\mathfrak{B}(s, \mu_1(s))$ and at  $\mathfrak{B}(s, \mu_2(s))$ will have two lightlike directions and then these planes will be timelike. Thus, we conclude that at all points of the focal surface, the tangent plane is timelike.

  When $n(s)$ is spacelike and $b(s)$ is timelike, it follows that at all points of the focal surface, the tangent plane is timelike, because $\mathfrak{B}_\mu(s,\mu)=b(s)$ where $b(s)$ is timelike.

(d) The LD set is empty and it is a consequence of (a), (b) and (c).

\end{proof}
\end{theo}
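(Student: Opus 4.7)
The plan is to exploit the fact that, for fixed $s$, the focal surface $\mathfrak{B}$ lies in the affine plane $\gamma(s)+\operatorname{span}(n(s),b(s))$, and consequently its tangent plane at a regular point equals $\operatorname{span}(n(s),b(s))=t(s)^\perp$. Once this is established, the causal type of the tangent plane is forced by that of $t(s)$ via the standard Lorentzian algebra of orthogonal complements, which gives (b), (c) and (d) in one stroke.

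First I would differentiate $\mathfrak{B}(s,\mu)=\gamma(s)+\tfrac{\varepsilon}{\delta k(s)}n(s)+\mu b(s)$ using the Frenet--Serret formulae. Immediately $\mathfrak{B}_\mu=b(s)$. For $\mathfrak{B}_s$ one applies $n'=-\varepsilon\delta k\,t+\varepsilon\tau\,b$ and $b'=\tau\,n$; the two $t$-contributions cancel because the coefficient $\varepsilon/(\delta k)$ of $n$ in the parametrisation is chosen for exactly this purpose, leaving
\[
\mathfrak{B}_s=\left(\mu\tau-\tfrac{\varepsilon k'}{\delta k^{2}}\right)n(s)+\tfrac{\tau}{\delta k}\,b(s).
\]
This proves (a): $\mathfrak{B}_s$ and $\mathfrak{B}_\mu=b$ are parallel iff the coefficient of $n$ in $\mathfrak{B}_s$ vanishes, i.e.\ iff $\mu=k'/(\varepsilon\delta k^{2}\tau)$, which is precisely the parametrisation of $\mathcal{C}$ given by Equation \eqref{eq:cuspidalcurve}. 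Hence the tangent plane fails to be defined exactly on $\mathcal{C}$.

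For (b) and (c), away from $\mathcal{C}$ the two partials are linearly independent elements of $\operatorname{span}(n(s),b(s))$, and so span this entire $2$-plane, which coincides with $t(s)^\perp$. I then invoke the standard fact that in $\mathbb{R}^{3}_{1}$ the orthogonal complement of a timelike vector is spacelike, while the orthogonal complement of a spacelike vector has signature $(1,1)$ and is therefore timelike. If $\gamma$ is timelike, $t$ is timelike, so $t^\perp$ is spacelike, yielding (b); if $\gamma$ is spacelike, $t$ is spacelike, so $t^\perp$ is timelike, yielding (c). This replaces the discriminant computation in the displayed expression for $\langle v,v\rangle$ by a conceptual argument.

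Finally, (d) follows as an immediate corollary of (b) and (c): on each timelike arc of $\gamma$ the tangent planes of $\mathfrak{B}$ are spacelike, on each spacelike arc they are timelike, and in neither case are they lightlike, so \emph{LD}$(\mathfrak{B})=\emptyset$. The main (and only delicate) step in the argument is verifying that the coefficient $\varepsilon/(\delta k)$ of $n$ in the parametrisation of $\mathfrak{B}$ produces the cancellation that confines $\mathfrak{B}_s$ to $\operatorname{span}(n,b)$; once that is in place, the remainder of the theorem reduces to elementary linear algebra for the Lorentzian inner product restricted to $t(s)^\perp$.
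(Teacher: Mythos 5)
Your proof is correct, and it takes a genuinely different route from the paper's at the decisive step. The preliminary computation is shared: the paper also differentiates $\mathfrak{B}$ via the Frenet--Serret formulae, obtains $\mathfrak{B}_s\in\operatorname{span}(n,b)$ and $\mathfrak{B}_\mu=b$, and proves (a) exactly as you do, by noting the partials are dependent precisely when $\mu=k'/(\varepsilon\delta k^{2}\tau)$ (both arguments lean on the standing hypothesis $\tau\neq0$; if $\tau(s)=0$ your $n$-coefficient criterion would degenerate to $k'(s)=0$ with no condition on $\mu$, but that case is excluded from the outset, as in Proposition \ref{prop:peisano}). Where you diverge is in (b) and (c): the paper expands $\langle v,v\rangle$ for $v=\lambda_1\mathfrak{B}_s+\lambda_2\mathfrak{B}_\mu$, classifies the plane by the sign of the discriminant of this quadratic, splits (c) into the subcases $n$ timelike/$b$ spacelike and $n$ spacelike/$b$ timelike, and needs a delicate extra paragraph for the values $\mu_1(s),\mu_2(s)$ at which $\mathfrak{B}_s$ itself becomes lightlike. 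Your observation that at regular points the partials span all of $\operatorname{span}(n,b)=t^{\perp}$, combined with the signature of orthogonal complements in $\mathbb{R}^3_1$ ($t$ timelike gives $t^{\perp}$ positive definite, $t$ spacelike gives $t^{\perp}$ of signature $(1,1)$, hence timelike and in particular non-degenerate), replaces all of that: the causal type of the tangent plane is independent of $\mu$, both subcases of (c) collapse into one, and the loci $\mu_1,\mu_2$ are seen a priori to be harmless, since a Lorentzian $2$-plane always carries exactly two lightlike directions without its induced metric being degenerate --- which is exactly the point the paper must argue by hand via the auxiliary identity expressing $\mathfrak{B}_s(s,\mu_2)$ in the plane spanned by $\mathfrak{B}_s(s,\mu_1)$ and $\mathfrak{B}_\mu(s,\mu_1)$. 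What the paper's longer computation buys is the explicit first fundamental form in the basis $(\mathfrak{B}_s,\mathfrak{B}_\mu)$ (the displayed formulas $(\ast)$ and $(\ast\ast)$), in the same style as the discriminant computations that must be redone at lightlike points in Section \ref{sec:fslig}, where no pseudo-orthonormal frame is available and your complement argument would not apply verbatim; your argument is shorter and more conceptual but yields only the signature, not the metric coefficients. Your deduction of (d) coincides with the paper's.
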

\section{The focal set near lightlike points}\label{sec:fslig}
  So far, we have studied what is happening to the focal surface of a spacelike or a timelike curve. The focal set is not defined at the lightlike points of the curve, but at these points, the bifurcation set of the family of distance squared functions on the curve is defined. Furthermore, the focal set is contained
   in the bifurcation set and they coincide if the curve is spacelike or timelike. Consider a curve with lightlike points. As the curve is in $\Omega$, these points are isolated (Proposition \ref{pro:isolados}) and the curve changes from spacelike to timelike at these points. We can think of the bifurcation set as a form of  pass from the focal set of the spacelike side of the curve to the focal set of the timelike side of the curve.
Our main goal in this section is then to understand this passage by  studying the geometry of the bifurcation set near the lightlike point of the curve. The principal result in this section is given by Theorem \ref{LD}.

 To study the bifurcation set near lightlike points $\gamma(t_0)$ of $\gamma$, we cannot parametrise the curve by the arc length since $\langle\gamma'(t_0),\gamma'(t_0)\rangle=0$. We consider then a smooth and regular curve $\gamma:I\rightarrow \mathbb{R}^3_1$ not parametrised by the arc length. The distance squared function is given by $f_v(t)=\langle\gamma(t)-v,\gamma(t)-v\rangle$. Thus $$\frac{1}{2}f_v'(t)=\langle\gamma(t)-v,\gamma'(t)\rangle.$$

It follows that $f_v$ is singular at $t$ if and only if $\langle\gamma(t)-v,\gamma'(t)\rangle=0$, equivalently,
$\gamma(t)-v=\mu N(t)+\lambda B(t)$, where $N(t)$ and $B(t)$ are vectors that generate the normal plane to the vector $\gamma'(t)$. (This condition includes the lightlike points.)

Differentiating again we obtain $$\frac{1}{2}f_v''(t)=\langle\gamma(t)-v,\gamma''(t)\rangle + \langle\gamma'(t), \gamma'(t)\rangle.$$

The singularity of $f_v$ is degenerate if and only if $f_v'(t)=f_v''(t)=0$, equivalently,
 $\gamma(t)-v=\mu N(t)+\lambda B(t)$ and
\begin{equation}\label{eq:bifvizlight}
\mu\langle N(t),\gamma''(t)\rangle+ \lambda\langle B(t), \gamma''(t)\rangle+ \langle\gamma'(t),\gamma'(t)\rangle=0.
\end{equation}

It follows that the bifurcation set of $f$ is given by
\begin{equation}\label{eq:biflightlike2}
\mathfrak{Bif}(f)=\{\gamma(t)-\mu N(t)-\lambda B(t)\,\,|\,\, (\mu,\lambda)\,\,\mbox{is\,\, solution\,\, of}\,\, \eqref{eq:bifvizlight}\}.
\end{equation}

 Away from the isolated lightlike points of $\gamma$, the bifurcation set is precisely the focal surface of the spacelike and timelike components of $\gamma$ studied in Section \ref{sec:focalst}.

Now our aim  is to study the  general expression \eqref{eq:biflightlike2} of the bifurcation set of the family of distance squared functions on the curve, to analyse what is happening with this surface when the curve $\gamma$ has lightlike points. We remember that since $\gamma \in \Omega$, then near a lightlike point, $\gamma$ changes from a spacelike curve to a timelike curve.

  Taking  $N(t)= \gamma'(t)\wedge\gamma''(t)$ and $B(t)=\gamma'(t)\wedge(\gamma'(t)\wedge\gamma''(t))$ and replacing  in \eqref{eq:bifvizlight} and \eqref{eq:biflightlike2}, we have that the bifurcation set of $f$ can be written as


$$\mathfrak{Bif}(f)=\{\gamma(t)-\mu N(t)-\frac{\langle\gamma'(t),\gamma'(t)\rangle}{\langle\gamma'(t)\wedge\gamma''(t),\gamma'(t)\wedge\gamma''(t)\rangle}B(t)\,\,|\,\, \mu\in\mathbb{R}\}.$$
In short, we will denote the map that defines $\mathfrak{Bif}(f)$ as $\mathfrak{B}(t,\mu)$, and the $\mathfrak{Bif}(f)$ set also as $\mathfrak{B}$.

 Since $\gamma\in\Omega$, at a lightlike point $\gamma(t_0)$ of $\gamma$, the vector $N(t_0)= \gamma'(t_0)\wedge\gamma''(t_0)$ is not lightlike, thus the bifurcation set above is well defined in the neighborhood of $t_0$. Furthermore, $B(t_0)$ is parallel to $\gamma'(t_0)$, and the vectors  $N(t_0)$ and $B(t_0)$ generate the normal plane to $\gamma'(t_0)$. In this case as $\langle\gamma'(t_0),\gamma'(t_0)\rangle=0$, then $\gamma'(t_0)$ is contained in this normal plane, which is a situation totally different from the Euclidian case.

Given a curve  $\gamma$ with lightlike points $\gamma(t_0)$, in the next result, we prove which types of singularities can  occur if  $v=\mathfrak{B}(t_0,\mu)$ for the distance squared function  $f_v$.  These are the only points of the bifurcation set that are not in the focal surfaces of the spacelike and timelike parts of the curve.

\begin{prop}\label{regular}
 Let $\gamma\in\Omega$. If $\gamma(t_0)$ is the lightlike point of  $\gamma$  and $v=\mathfrak{B}(t_0,\mu)$ then the distance squared function $f_v$  has  $A_2$-singularity except if  $\mu_0=\dfrac{-3\langle \gamma'(t_0),\gamma''(t_0)\rangle}{\langle\gamma'(t_0)\wedge\gamma''(t_0),\gamma'''(t_0)\rangle}$, where $f_{v_0}$ has  $A_{\geq3}$-singularity for $v_0=\mathfrak{B}(t_0,\mu_0)$.

\begin{proof}
 Consider $f_v(t)=\langle\gamma(t)-v,\gamma(t)-v\rangle$ the distance squared function on  $\gamma$. Then
$f_v^{(3)}(t)=6\langle\gamma'(t),\gamma''(t)\rangle+2\langle\gamma(t)-v, \gamma'''(t)\rangle$, i.e.,
$$f_v^{(3)}(t)=6\langle\gamma'(t),\gamma''(t)\rangle+2\langle\lambda(t)\gamma'(t)\wedge(\gamma'(t)\wedge\gamma''(t)) +\mu \gamma'(t)\wedge\gamma''(t),\gamma'''(t)\rangle.$$
Therefore, at lightlike point $\gamma(t_0)$, $f_v^{(3)}(t_0)=6\langle\gamma'(t_0),\gamma''(t_0)\rangle+2\langle\mu\gamma'(t_0)\wedge\gamma''(t_0),\gamma'''(t_0)\rangle.$ Remember that we are supposing as in Section \ref{sec:focalst} that the torsion is non zero at $t_0$ and therefore  $\langle\gamma'(t_0)\wedge\gamma''(t_0),\gamma'''(t_0)\rangle\neq0$. Then we have   $f_v^{(3)}(t_0)=0$ if and only if  $\mu=\dfrac{-3\langle \gamma'(t_0),\gamma''(t_0)\rangle}{\langle\gamma'(t_0)\wedge\gamma''(t_0),\gamma'''(t_0)\rangle}\neq0$, because $\gamma\in\Omega$.

\end{proof}
\end{prop}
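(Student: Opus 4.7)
The plan is to exploit the fact that by construction $v=\mathfrak{B}(t_0,\mu)$ already satisfies equation \eqref{eq:bifvizlight}, so $f_v'(t_0)=f_v''(t_0)=0$ is automatic and the singularity is at least $A_2$; the whole question reduces to computing $f_v^{(3)}(t_0)$ and seeing for which $\mu$ it vanishes.

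First I would write down the general formula
\[
\tfrac{1}{2}f_v^{(3)}(t)=3\langle\gamma'(t),\gamma''(t)\rangle+\langle\gamma(t)-v,\gamma'''(t)\rangle
\]
and substitute the decomposition $\gamma(t_0)-v=\mu N(t_0)+\lambda(t_0)B(t_0)$ coming from the parametrisation of $\mathfrak{Bif}(f)$, where $N(t)=\gamma'(t)\wedge\gamma''(t)$, $B(t)=\gamma'(t)\wedge(\gamma'(t)\wedge\gamma''(t))$ and $\lambda(t)=\langle\gamma'(t),\gamma'(t)\rangle/\langle N(t),N(t)\rangle$. The crucial simplification at a lightlike point is that $\langle\gamma'(t_0),\gamma'(t_0)\rangle=0$ and $\gamma\in\Omega$ guarantees $\langle N(t_0),N(t_0)\rangle\neq 0$ (since $N(t_0)$ is not lightlike, as already noted in the excerpt), so $\lambda(t_0)=0$. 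Hence the $B$-component drops out and only the $\mu N$-term survives.

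After substituting I would obtain
\[
\tfrac{1}{2}f_v^{(3)}(t_0)=3\langle\gamma'(t_0),\gamma''(t_0)\rangle+\mu\langle\gamma'(t_0)\wedge\gamma''(t_0),\gamma'''(t_0)\rangle,
\]
and solving $f_v^{(3)}(t_0)=0$ for $\mu$ yields exactly the claimed $\mu_0$. For this to be a valid equation for $\mu$ I need the coefficient $\langle N(t_0),\gamma'''(t_0)\rangle$ to be nonzero; this is precisely the non-vanishing torsion hypothesis transferred to our unparametrised setting (it is the condition that ensures singularities of type $A_{\geq 3}$ can occur at all, mirroring the role played by $\tau\neq 0$ in Proposition~\ref{prop:peisano}). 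Finally, $\mu_0\neq 0$ because the numerator $\langle\gamma'(t_0),\gamma''(t_0)\rangle\neq 0$ by the definition of $\Omega$.

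The argument is mostly a direct computation, so there is no single hard step; the main thing to get right is the bookkeeping that justifies discarding the $B$-term and confirming the denominator is nonzero. The conceptually important observation, which I would emphasise, is that at a lightlike point the parametrisation of $\mathfrak{Bif}(f)$ degenerates to a one-parameter family $\mu\mapsto\gamma(t_0)-\mu N(t_0)$ (since $\lambda(t_0)=0$), and the exceptional value $\mu_0$ singles out a single point on this line where the contact with the pseudo-sphere jumps from $A_2$ to $A_{\geq 3}$, which is the geometric content needed for the subsequent analysis of the bifurcation set near lightlike points.
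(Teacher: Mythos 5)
Your proposal is correct and follows essentially the same route as the paper's proof: both compute $f_v^{(3)}(t_0)$, use $\langle\gamma'(t_0),\gamma'(t_0)\rangle=0$ to kill the $B$-component of $\gamma(t_0)-v$, identify the nonvanishing of $\langle\gamma'(t_0)\wedge\gamma''(t_0),\gamma'''(t_0)\rangle$ with the nonzero-torsion hypothesis, and solve for $\mu_0$, with $\mu_0\neq 0$ from $\gamma\in\Omega$. Your explicit remark that $\lambda(t_0)=0$ because $\gamma\in\Omega$ forces $\langle N(t_0),N(t_0)\rangle\neq 0$ is a slightly more careful bookkeeping of a step the paper leaves implicit, but the argument is the same.
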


 We analyse the curve $\mathfrak{B}(t_0,\mu)$, $\mu\in \mathbb{R}$, of the surface $\mathfrak{B}$,  that is a curve that split the focal surface of the spacelike side of  $\gamma$ of the focal surface of the timelike side of $\gamma$.

\begin{prop}\label{prop:curvaB}
(a) Let $\gamma:I\rightarrow \mathbb{R}^3_1$  be a regular curve with $\gamma(t_0)$ a lightlike point of $\gamma$. On the points of the curve  $\mathfrak{B}(t_0,\mu)$, $\mu\in \mathbb{R}$, surface $\mathfrak{B}$ has  degenerate tangent plane except for $\mu_0=\dfrac{-3\langle \gamma'(t_0),\gamma''(t_0)\rangle}{\langle\gamma'(t_0)\wedge\gamma''(t_0),\gamma'''(t_0)\rangle}$, where the tangent plane is not defined. Thus, the  $LD$  set of $\mathfrak{B}$ is  $\mathfrak{B}(t_0,\mu)$ with $\mu\neq\mu_0$.

\noindent (b) The curve $\mathfrak{B}(t_0,\mu)$, $\mu\in \mathbb{R}$, intersects the  cuspidal curve $\mathcal{C}$ when $\mu=\mu_0$.
\end{prop}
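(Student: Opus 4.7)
The plan is to compute the partial derivatives of the parametrisation
$$\mathfrak{B}(t,\mu)=\gamma(t)-\mu N(t)-h(t)B(t),\qquad h(t):=\frac{\langle\gamma'(t),\gamma'(t)\rangle}{\langle N(t),N(t)\rangle},$$
at the lightlike point $t_0$, determine when the two partials are linearly dependent, and inspect the induced metric on their span when they are not. Part (b) will follow almost trivially from Proposition \ref{regular}, so the substance is in (a).

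First, $\mathfrak{B}_\mu(t_0,\mu)=-N(t_0)$ is non-zero and non-null by the $\Omega$-hypothesis. For $\mathfrak{B}_t$, the fact that $h(t_0)=0$ reduces it to $\gamma'(t_0)-\mu N'(t_0)-h'(t_0)B(t_0)$, where $h'(t_0)=2\langle\gamma'(t_0),\gamma''(t_0)\rangle/\langle N(t_0),N(t_0)\rangle$. The crucial step is to simplify this at $t_0$ via two Minkowski-space identities: the ``BAC--CAB'' rule $a\wedge(b\wedge c)=\langle a,b\rangle c-\langle a,c\rangle b$, which at a lightlike point gives $B(t_0)=-a_0\gamma'(t_0)$ with $a_0:=\langle\gamma'(t_0),\gamma''(t_0)\rangle\neq 0$, and the Lagrange-type identity $\langle a\wedge b,a\wedge b\rangle=\langle a,b\rangle^2-\langle a,a\rangle\langle b,b\rangle$, which yields $\langle N(t_0),N(t_0)\rangle=a_0^2$. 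Substituting, the $-h'(t_0)B(t_0)$ term contributes exactly $+2\gamma'(t_0)$, and one obtains the clean form $\mathfrak{B}_t(t_0,\mu)=3\gamma'(t_0)-\mu N'(t_0)$.

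To finish (a), I expand $\gamma'''(t_0)$ in the basis $\{\gamma'(t_0),\gamma''(t_0),N(t_0)\}$; wedging with $\gamma'(t_0)$ yields $N'(t_0)=qN(t_0)-(n_0/a_0)\gamma'(t_0)$ for some $q\in\mathbb{R}$, where $n_0:=\langle\gamma'(t_0)\wedge\gamma''(t_0),\gamma'''(t_0)\rangle\neq 0$. Hence $\mathfrak{B}_t(t_0,\mu)=(3+\mu n_0/a_0)\gamma'(t_0)-\mu qN(t_0)$, and proportionality with $\mathfrak{B}_\mu=-N(t_0)$ occurs exactly when $3+\mu n_0/a_0=0$, i.e.\ when $\mu=-3a_0/n_0=\mu_0$. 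For every $\mu\neq\mu_0$ the tangent plane is $\mathrm{span}\{\gamma'(t_0),N(t_0)\}$, whose Gram matrix is $\mathrm{diag}(0,a_0^2)$, of rank one; hence the plane is lightlike, which is precisely the LD condition.

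Part (b) is immediate: Proposition \ref{regular} identifies $\mu_0$ as the unique parameter making $f_{\mathfrak{B}(t_0,\mu_0)}$ have an $A_{\geq 3}$-singularity at $t_0$, and by definition $\mathcal{C}$ is the locus of such centres. The main obstacle is the BAC--CAB simplification producing the coefficient $3$ in $\mathfrak{B}_t(t_0,\mu)$: it is precisely this constant that ensures the degeneracy value coincides with the $\mu_0$ coming from Proposition \ref{regular}. Once this simplification is in hand, everything else is formal. Some care is required with the sign conventions of the pseudo-wedge product, but the final expression for $\mu_0$ is coordinate-free.
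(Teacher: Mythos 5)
Your computation is essentially the paper's own proof of (a), made slightly more explicit. The paper likewise differentiates $\mathfrak{B}(t,\mu)=\gamma(t)-\mu N(t)-\lambda(t)B(t)$, uses $\lambda(t_0)=0$ together with $\lambda'(t_0)=2\langle\gamma'(t_0),\gamma''(t_0)\rangle/\langle N(t_0),N(t_0)\rangle$ to arrive at $\mathfrak{B}_t(t_0,\mu)=3\gamma'(t_0)-\mu\left(\gamma'(t_0)\wedge\gamma'''(t_0)\right)$ and $\mathfrak{B}_\mu(t_0,\mu)=-N(t_0)$, and finds the same $\mu_0$; part (b) is proved in the paper exactly as you propose, by citing Proposition \ref{regular}. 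Your two identities do hold with the paper's wedge convention (one checks on the standard basis that $a\wedge(b\wedge c)=\langle a,b\rangle c-\langle a,c\rangle b$ and $\langle a\wedge b,a\wedge b\rangle=\langle a,b\rangle^2-\langle a,a\rangle\langle b,b\rangle$), so $B(t_0)=-a_0\gamma'(t_0)$, $\langle N(t_0),N(t_0)\rangle=a_0^2$, and the coefficient $3$ comes out correctly --- this is the step the paper states without detail. Where you diverge is in detecting dependence and degeneracy: the paper computes $\mathfrak{B}_t\wedge\mathfrak{B}_\mu=\left(3\langle\gamma'(t_0),\gamma''(t_0)\rangle+\mu\langle\gamma'(t_0)\wedge\gamma''(t_0),\gamma'''(t_0)\rangle\right)\gamma'(t_0)$ and then, for $\mu\neq\mu_0$, writes $\langle v,v\rangle=0$ as a quadratic whose discriminant vanishes, exhibiting the unique lightlike direction; your change of basis to $\{\gamma'(t_0),N(t_0)\}$ with Gram matrix $\mathrm{diag}(0,a_0^2)$ reaches the same rank-one conclusion more transparently, and identifies the null direction as $\gamma'(t_0)$, consistent with Theorem \ref{LD}(3).

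One step is missing, however: the claim that the $LD$ set \emph{equals} $\{\mathfrak{B}(t_0,\mu)\,:\,\mu\neq\mu_0\}$ requires the reverse inclusion, namely that no point of $\mathfrak{B}$ away from this normal line has degenerate tangent plane. You only prove that the points $\mathfrak{B}(t_0,\mu)$, $\mu\neq\mu_0$, lie in $LD$. The paper closes this with Theorem \ref{curvatime}: away from $t=t_0$ the surface $\mathfrak{B}$ is the focal surface of the spacelike (resp.\ timelike) arc of $\gamma$, which is timelike (resp.\ spacelike), hence nondegenerate, so $LD$ is contained in the curve $\mathfrak{B}(t_0,\mu)$. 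A one-sentence citation repairs this. Two smaller points: you assert $n_0\neq 0$ without justification --- as in Proposition \ref{regular} this is the standing hypothesis that the torsion is nonzero at $t_0$, and without it $\mu_0$ is not even defined; and although the statement only says ``regular curve,'' your (correct) use of $a_0\neq 0$ requires $\gamma\in\Omega$, which you do invoke but should state as a hypothesis.
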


\begin{proof}
\emph{(a)} We have \begin{align*}
\frac{\partial\mathfrak{B}}{\partial t}(t,\mu)&=\gamma'(t)-\mu\left(\gamma'(t)\wedge\gamma'''(t)\right)-\lambda'(t)\left(\gamma'(t)\wedge(\gamma'(t)\wedge\gamma''(t))\right)-
\lambda(t)\left(\gamma'(t)\wedge(\gamma'(t)\wedge\gamma''(t))\right)' \\
\frac{\partial\mathfrak{B}}{\partial \mu}(t,\mu)&=-\left(\gamma'(t)\wedge\gamma''(t)\right),\,\,\,\,\hbox{where}
\end{align*}
$${\lambda'(t)}=\frac{2\langle\gamma'(t),\gamma''(t)\rangle\langle\gamma'(t)\wedge\gamma''(t),\gamma'(t)\wedge\gamma''(t)\rangle-2
\langle\gamma'(t),\gamma'(t)\rangle\langle\gamma'(t)\wedge\gamma''(t),\gamma'(t)\wedge\gamma'''(t)\rangle}{\left(\langle\gamma'(t)\wedge\gamma''(t),
\gamma'(t)\wedge\gamma''(t)\rangle\right)^2}$$

Then $\dfrac{\partial\mathfrak{B}}{\partial t}(t_0,\mu)=3\gamma'(t_0)-\mu\left(\gamma'(t_0)\wedge\gamma'''(t_0)\right)$ and $\dfrac{\partial\mathfrak{B}}{\partial \mu}(t_0,\mu)=-\left(\gamma'(t_0)\wedge\gamma''(t_0)\right),$ and therefore

$\dfrac{\partial\mathfrak{B}}{\partial t}(t_0,\mu)\wedge\dfrac{\partial\mathfrak{B}}{\partial \mu}(t_0,\mu)= (3\langle\gamma'(t_0),\gamma''(t_0)\rangle+\mu\langle\gamma'(t_0)\wedge\gamma''(t_0),\gamma'''(t_0)\rangle)\gamma'(t_0)$. As the torsion is non zero at $t_0$, we have that the vectors $\gamma'(t_0)$, $\gamma''(t_0)$ and $\gamma'''(t_0)$ are  linearly independent and then  $\dfrac{\partial\mathfrak{B}}{\partial t}(t_0,\mu)$ and $\dfrac{\partial\mathfrak{B}}{\partial \mu}(t_0,\mu)$ are linearly dependent if and only if $\mu=\dfrac{-3\langle \gamma'(t_0),\gamma''(t_0)\rangle}{\langle\gamma'(t_0)\wedge\gamma''(t_0),\gamma'''(t_0)\rangle}$, which we call $\mu_0$.

Furthermore, using Theorem \ref{curvatime}, we have that the focal surface of the spacelike side of the curve is timelike and that the focal surface  of the   timelike side of the curve is spacelike. Thus, the LD is contained in  $\mathfrak{B}(t_0,\mu)$.

Supposing  $\mu\neq\mu_0$,
the vectors of the tangent planes at the points of the curve $\mathfrak{B}(t_0,\mu)$ are given by: $$v=\lambda_1(3\gamma'(t_0)-\mu\left(\gamma'(t_0)\wedge\gamma'''(t_0)\right))-\lambda_2\left(\gamma'(t_0)\wedge\gamma''(t_0)\right).$$
 Then
\begin{align*}
\langle v,v\rangle &=\lambda_1^2\mu^2\langle\gamma'(t_0)\wedge\gamma'''(t_0),\gamma'(t_0)\wedge\gamma'''(t_0)\rangle  +2\lambda_1\lambda_2\mu\langle\gamma'(t_0)\wedge\gamma'''(t_0),\gamma'(t_0)\wedge\gamma''(t_0)\rangle +\\
& \lambda_2^2\langle\gamma'(t_0)\wedge\gamma''(t_0),\gamma'(t_0)\wedge\gamma''(t_0)\rangle.
\end{align*}
Making $\langle v,v\rangle=0$ and thinking of the above equation as a quadratic equation of  $\lambda_2$, we have $\Delta=0$.
Therefore, each  tangent plane  has a unique lightlike direction, given by $$(\lambda_1,\lambda_2)=\left(\lambda_1 ,-\lambda_1\dfrac{\mu\langle\gamma'(t_0)\wedge\gamma'''(t_0),\gamma'(t_0)\wedge\gamma''(t_0)\rangle}{\langle\gamma'(t_0)
\wedge\gamma''(t_0),\gamma'(t_0)\wedge\gamma''(t_0)\rangle }\right),$$ with $\lambda_1\neq0$. Thus, the induced metric on these planes is degenerate and the curve $\mathfrak{B}(t_0,\mu)$, with $\mu\neq \mu_0$,  is the $LD$ of the surface  $\mathfrak{B}$. (Observe that the denominator is different from zero because $\gamma\in\Omega$.)

\emph{(b)} The proof of this case  follows from  Proposition \ref{regular}, where we have that  $f_{v_0}$ has singularity $A_{\geq3}$ for $v_0=\mathfrak{B}(t_0,\mu_0)$, and the  cuspidal curve is precisely given by $ v's$ where $f_v$ has singularity $A_{\geq3}$.
\end{proof}

We  prove below that the surface   $\mathfrak{B}$ intersects the curve $\gamma$ at the  lightlike points and we study the geometric behavior of $\mathfrak{B}$ at the neighborhood of these points.

\begin{theo}\label{LD} Let $\gamma\in\Omega$, with  $\gamma(t_0)$ lightlike point, and  let  $\mathfrak{B}$ be the bifurcation set of the family of distance squared functions on $\gamma$. Then
\begin{itemize}
\item [(1)] the  surface $\mathfrak{B}$ intersects the curve $\gamma$ locally only at the lightlike point  $\gamma(t_0)$.

\item[(2)] the surface $\mathfrak{B}$ is regular at $\gamma(t_0)$.

\item [(3)] the  tangent line to the curve at $\gamma(t_0)$ is contained in the tangent plane to $\mathfrak{B}$ at such a point, that is, the unique lightlike direction of the tangent plane of $\mathfrak{B}$ at $\gamma(t_0)$ is the direction of the tangent line of $\gamma$ at $\gamma(t_0)$.

 \item[(4)] the $LD$ set of the  surface $\mathfrak{B}$ is a normal line to the curve passing through $\gamma(t_0)$ and splits the focal surface into a Riemannian and a Lorentzian region.

\end{itemize}
\begin{proof}
(1) Since $\gamma\in\Omega$, we have that
 at a lightlike point $\gamma(t_0)$,  $$\frac{\langle\gamma'(t_0),\gamma'(t_0)\rangle}{\langle\gamma'(t_0)\wedge\gamma''(t_0),\gamma'(t_0)\wedge\gamma''(t_0)\rangle}=0.$$ Then $\mathfrak{B}(t_0,0)=\gamma(t_0)$. Locally this intersection occurs only at the lightlike points because of Proposition \ref{prop:curvatimeesf} and Remark \ref{rem:curvatimeesf}.

(2) From Proposition \ref{regular}, we have that at $\gamma(t_0)=\mathfrak{B}(t_0,0)=v_0$, $f_{v_{0}}$  has only singularity of type $A_2$. Thus, by  Theorem \ref{teo:bruce}, surface $\mathfrak{B}$ is locally regular at this point.

 (3) Observe that $\gamma'(t_0)$ belongs to the tangent plane of the surface at $\gamma(t_0)$, which is generated by
$\dfrac{\partial\mathfrak{B}}{\partial t}(t_0,0)=3\gamma'(t_0)$ and $\dfrac{\partial\mathfrak{B}}{\partial \mu}(t_0,0)=-\left(\gamma'(t_0)\wedge\gamma''(t_0)\right)$.

Furthermore, the vectors of the tangent plane to the surface at $\gamma(t_0)$, are given by: $$v=3\lambda_1\gamma'(t_0)-\lambda_2\left(\gamma'(t_0)\wedge\gamma''(t_0)\right),$$ where $\lambda_1,\lambda_2\in\mathbb{R}$, and $\langle v,v\rangle=\lambda_2^2\langle \gamma'(t_0),\gamma''(t_0)\rangle^2\geq 0$. Thus $\gamma'(t_0)$ is the unique lightlike direction of the tangent plane, i.e. the tangent plane at $\gamma(t_0)$ is lightlike.

(4) As $\mathfrak{B}(t_0,\mu)=\gamma(t_0)-\mu N(t_0)$, then this normal line of $\gamma$ is contained in the focal surface.  By Proposition \ref{prop:curvaB} \emph{(a)},  the $LD$ is $\mathfrak{B}(t_0,\mu)$ except when $\mu_0=\dfrac{-3\langle \gamma'(t_0),\gamma''(t_0)\rangle}{\langle\gamma'(t_0)\wedge\gamma''(t_0),\gamma'''(t_0)\rangle}$, which is different from zero because $\gamma\in\Omega$. Therefore, near the $(t_0,0)$, i.e, near the $\mathfrak{B}(t_0,0)=\gamma(t_0)$, the induced metric along this normal line is degenerate. For this, it is enough to take a neighborhood of  $(t_0,0)$ that does not contain $\mu_0$.

\end{proof}
\end{theo}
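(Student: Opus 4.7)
The plan is to prove the four items in order, relying on the explicit parametrisation
$$\mathfrak{B}(t,\mu)=\gamma(t)-\mu N(t)-\frac{\langle\gamma'(t),\gamma'(t)\rangle}{\langle N(t),N(t)\rangle}B(t),$$
with $N=\gamma'\wedge\gamma''$ and $B=\gamma'\wedge N$, together with the preparatory Propositions \ref{pro:isolados}, \ref{prop:curvatimeesf}, \ref{regular} and \ref{prop:curvaB}, Remark \ref{rem:curvatimeesf}, and Theorems \ref{teo:bruce} and \ref{curvatime}. For \emph{(1)}, I would observe that the $B(t)$-coefficient vanishes exactly when $\gamma'(t)$ is lightlike, so $\mathfrak{B}(t_0,0)=\gamma(t_0)$ is immediate; to exclude other nearby intersections I would split a small neighbourhood of $t_0$ into its spacelike and timelike halves (possible because $t_0$ is isolated by Proposition \ref{pro:isolados}), and apply Proposition \ref{prop:curvatimeesf} on the timelike side and Remark \ref{rem:curvatimeesf} on the spacelike side after further shrinking if needed. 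For \emph{(2)}, I would invoke Proposition \ref{regular}: since $\gamma\in\Omega$ forces $\langle\gamma'(t_0),\gamma''(t_0)\rangle\neq0$, the exceptional value $\mu_0$ is non-zero, so at $v_0=\mathfrak{B}(t_0,0)$ the function $f_{v_0}$ has only an $A_2$-singularity; as $f$ is a $(p)$-versal unfolding, Theorem \ref{teo:bruce}(a) yields that $\mathfrak{Bif}(f)$ is locally diffeomorphic to $\mathbb{R}^2$ at $v_0$, which is exactly regularity of $\mathfrak{B}$ at $\gamma(t_0)$.

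For \emph{(3)}, I would compute the tangent plane at $(t_0,0)$ by specialising the partial derivatives from Proposition \ref{prop:curvaB} to $\mu=0$, obtaining $\partial_t\mathfrak{B}(t_0,0)=3\gamma'(t_0)$ and $\partial_\mu\mathfrak{B}(t_0,0)=-N(t_0)$, so that $\gamma'(t_0)$ automatically lies in the tangent plane. A general tangent vector has the form $v=3\lambda_1\gamma'(t_0)-\lambda_2 N(t_0)$, and I would evaluate $\langle v,v\rangle$ using three facts at $t_0$: the lightlike condition $\langle\gamma',\gamma'\rangle=0$, the orthogonality $\langle\gamma',N\rangle=0$ built into the cross product, and the Lorentzian Lagrange identity
$$\langle N,N\rangle=\langle\gamma',\gamma''\rangle^2-\langle\gamma',\gamma'\rangle\langle\gamma'',\gamma''\rangle=\langle\gamma'(t_0),\gamma''(t_0)\rangle^2.$$
The first two kill the cross terms and the $\lambda_1^2$ term, leaving $\langle v,v\rangle=\lambda_2^2\langle\gamma'(t_0),\gamma''(t_0)\rangle^2\geq 0$; since $\gamma\in\Omega$ the bracket on the right is non-zero, so $\langle v,v\rangle=0$ forces $\lambda_2=0$ and hence $v$ parallel to $\gamma'(t_0)$, proving uniqueness of the lightlike direction.

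For \emph{(4)}, the slice $\mu\mapsto\mathfrak{B}(t_0,\mu)=\gamma(t_0)-\mu N(t_0)$ parametrises a straight line through $\gamma(t_0)$ in the direction $N(t_0)$, which is orthogonal to $\gamma'(t_0)$ and therefore a normal line to $\gamma$; Proposition \ref{prop:curvaB}(a) already identifies this line with the $LD$ of $\mathfrak{B}$ outside the isolated value $\mu_0\neq0$, so shrinking the $\mu$-interval around $0$ puts the whole local normal line into the $LD$. The Riemannian/Lorentzian splitting is then a direct consequence of Theorem \ref{curvatime}: the timelike half-arc of $\gamma$ has a spacelike focal surface (Riemannian region) while the spacelike half-arc has a timelike focal surface (Lorentzian region), and the normal line is their common boundary. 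The main obstacle I foresee is step \emph{(3)}: the argument relies simultaneously on the genericity $\langle\gamma',\gamma''\rangle\neq0$ and on the precise form of the Lorentzian Lagrange identity to rule out a second lightlike direction; if a second such direction existed, the tangent plane at $\gamma(t_0)$ would be timelike and the boundary interpretation in \emph{(4)} would collapse.
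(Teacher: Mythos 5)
Your proposal is correct and follows essentially the same route as the paper's proof: the same parametrisation $\mathfrak{B}(t,\mu)$, item (1) via $\mathfrak{B}(t_0,0)=\gamma(t_0)$ plus Proposition \ref{prop:curvatimeesf} and Remark \ref{rem:curvatimeesf}, item (2) via Proposition \ref{regular} and Theorem \ref{teo:bruce}, item (3) via the same tangent-plane computation, and item (4) via Proposition \ref{prop:curvaB}(a) with $\mu_0\neq 0$. You merely make explicit two details the paper leaves implicit, namely the Lorentzian Lagrange identity $\langle N,N\rangle=\langle\gamma',\gamma''\rangle^{2}-\langle\gamma',\gamma'\rangle\langle\gamma'',\gamma''\rangle$ behind the formula $\langle v,v\rangle=\lambda_2^2\langle\gamma'(t_0),\gamma''(t_0)\rangle^2$, and the identification of the Riemannian and Lorentzian regions through Theorem \ref{curvatime}.
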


%
\begin{figure}[H]
\center
\includegraphics[scale=0.4]{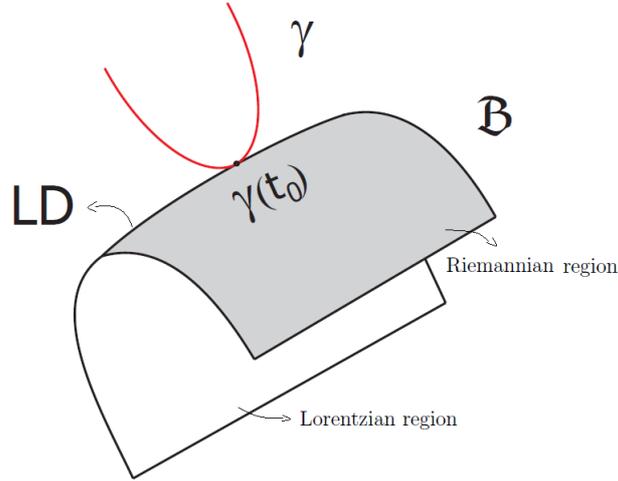}
\caption{Metric structure of the focal surface locally at a lightlike point of $\gamma$.}
\end{figure}

Observe that the  cuspidal curve $\mathcal{C}$ intersects the curve $\mathfrak{B}(t_0,\mu)$ at $\mathfrak{B}(t_0,\mu_0)$  (Proposition \ref{prop:curvaB} \emph{(b)}), i.e., away  from lightlike points where $\mu=0$.  In this case,   the   local  configuration of the bifurcation set at $\mathfrak{B}(t_0,\mu_0)=v_0$ is as in Figure \ref{ld2}, if $f_{v_0}$ has singularity  $A_3$ at $t_0$.
\begin{figure}[H]
\center
\includegraphics[scale=0.5]{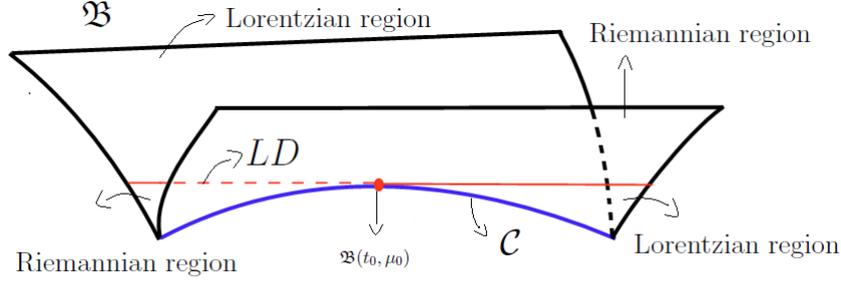}
\caption{ Example of a metric structure of a surface $\mathfrak{B}$ locally at no lightlike point $\mathfrak{B}(t_0,\mu_0)$.}
\label{ld2}
\end{figure}

In the case where   $f_{v_0}$ has  singularity   $A_4$ at $t_0$, for $\mathfrak{B}(t_0,\mu_0)=v_0$, the curve $LD$ intersects the  cuspidal curve at the singular point  of $\mathcal{C}$.

\section{Focal set of curves in $S^2_1$}\label{sec:focals21}

In this section, we consider curves in the de Sitter space $S^2_1\subset \mathbb{R}^3_1$ and their focal sets also in   $S^2_1$, which we call the spherical focal curve. To obtain  the results, we have Section \ref{sec:fslig} as a motivation.

Let $\gamma:I\rightarrow S^2_1$ be a spacelike or a timelike smooth and regular curve in $S^2_1$  parametrised by the arc length. For this curve, consider the orthonormal basis $\{\gamma(s),t(s)=\gamma'(s),n(s)=\gamma(s)\wedge t(s)\}$ of $\mathbb{R}^3_1$ along $\gamma$. By standard arguments, we have the following Frenet-Serret  type formulae:

 $$\left\{
   \begin{aligned}
     \gamma'(s) &=t(s) \\
      t'(s) &= -\varepsilon(\gamma(s))\,\gamma(s)+\delta(\gamma(s))\,k_g(s)\,n(s) \\
    n'(s) &=-\varepsilon(\gamma(s))\,k_g(s)\,t(s)
   \end{aligned}
 \right.,$$

\noindent where $\varepsilon(\gamma(s))=sign(t(s))$, $\delta(\gamma(s))=sign(n(s))$ and $k_g(s)=\langle\gamma''(s),n(s)\rangle$ is the geodesic curvature of $\gamma$ at $s$.

Consider the family of distance squared functions $f:I\times S^2_1\rightarrow\mathbb{R}$ on $\gamma$, given by $$f(s,v)=\langle\gamma(s)-v,\gamma(s)-v\rangle,$$

\noindent and $f_v:I\rightarrow \mathbb{R}$  given by $f_v(s)=f(s,v)$, for some $v\in S^2_1$ fixed.

 The \emph{spherical bifurcation set} of $f$ is given by $$\mathfrak{Bif}(f)=\{v\in S^{2}_1\,\,|\,\,f_v'(s)=f_v''(s)=0\,\, \mbox{in}\,\, (s,v)\,\,\mbox{for\,\, some}\,\, s\},$$ i.e, the directions where the singularity of $f$ at $s$ is at least $A_2$. The \emph{spherical focal curve of $\gamma$} is given by the spherical bifurcation set of $f$. Furthermore,  the spherical focal curve is the intersection of the focal surface in $\mathbb{R}^3_1$ with the  de Sitter space $S^2_1$.  Observe that since  $-\dfrac{1}{2}f_v(s)=\langle\gamma(s),v\rangle-1$ if $v\in S^2_1$, then  the singularities of the distance squared function and of the height function are the same. Therefore, the evolutes of a curve $\gamma$ in $S^2_1$, coincide with the spherical focal curve of $\gamma$.
 In \cite{Izumiyapeisanotorri}, the authors study the evolutes of hyperbolic plane curves, that is, a curve in $H^{2}(-1)$ and these evolutes  also coincide with the bifurcation set in $H^{2}(-1)$.

For a spacelike or a timelike curve  $\gamma$  parametrised by the arc length with $k_g(s)\neq 0$, we have that the spherical focal curve of $\gamma$ is given by $$\alpha^{\pm}(s)=\pm\frac{k_g(s)}{\sqrt{k_g^2(s)+\delta(\gamma(s))}}\gamma(s)\pm \frac{\varepsilon(\gamma(s))}{\sqrt{k_g^2(s)+\delta(\gamma(s))}}n(s).$$

\begin{rem}
To define the spherical focal curve, we must have  $k_g^2(s)+\delta(\gamma(s))>0$. Then in the case that $\gamma$ is spacelike, we must have $k_g(s)<-1$ or $k_g(s)>1$ and in the  case that  $\gamma$ is timelike, the spherical focal curve is always defined. Furthermore, as $\alpha^{-}(s)=-\alpha^{+}(s)$ we work only with $\alpha^{+}(s)$.
\end{rem}
Consider $\mathcal{C}$ the  cuspidal curve of the focal surface of $\gamma$ in $\mathbb{R}^3_1$, as in Section \ref{sec:focalst}. Then, we have the next result.
\begin{prop}
The singular points of the  spherical focal curve of $\gamma$ are given by $S^2_1\cap \mathcal{C}$.
\end{prop}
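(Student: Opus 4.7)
\medskip

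\noindent\textbf{Proof plan.} My plan is to characterise both sides by a single analytic condition, namely $k_g'(s_0)=0$, and then conclude.

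\medskip

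First I would compute $(\alpha^{+})'(s)$ directly. Writing $A(s)=\sqrt{k_g^2(s)+\delta}$ and using the spherical Frenet--Serret formulae $\gamma'=t$ and $n'=-\varepsilon k_g t$, the two terms involving $t$ conspire to cancel:
$$k_g\,\gamma' + \varepsilon\,n' \;=\; k_g t - \varepsilon^2 k_g t \;=\;0.$$
What remains is
$$(\alpha^{+})'(s) \;=\; \left(\tfrac{1}{A}\right)'(k_g\gamma+\varepsilon n) \;+\; \tfrac{k_g'}{A}\gamma \;=\; \tfrac{k_g'}{A^{3}}\bigl(\delta\gamma - \varepsilon k_g\, n\bigr),$$
after simplifying with $A^2=k_g^2+\delta$ and $\delta^2=1$. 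Since $\gamma(s)$ and $n(s)$ are linearly independent at every $s$, the vector $\delta\gamma-\varepsilon k_g n$ never vanishes, so $\alpha^{+}$ is singular at $s_0$ if and only if $k_g'(s_0)=0$.

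\medskip

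Next, I would identify when $\alpha^{+}(s_0)$ lies on $\mathcal{C}$. By definition, $\mathcal{C}$ is the locus of $v$ for which $f_v$ has an $A_{\geq 3}$-singularity. Since $\alpha^{+}(s_0)$ already lies on the spherical focal curve, $f_v$ has at least an $A_2$-singularity at $s_0$; what I need to test is whether $f_v^{(3)}(s_0)=0$. Here I would use the remark in the excerpt: for $v\in S^{2}_{1}$, $-\tfrac{1}{2}f_v(s)=\langle\gamma(s),v\rangle-1$, so the derivatives of $f_v$ are, up to the factor $-2$, the derivatives of the height function $\langle\gamma(s),v\rangle$. A short calculation with the spherical Frenet--Serret formulae gives
$$f_v^{(3)}(s) \;=\; 2\varepsilon\,(1+\delta k_g^{2})\langle t,v\rangle \;-\; 2\delta\,k_g'\langle n,v\rangle.$$
Now evaluate at $v=\alpha^{+}(s_0)$: using $\langle\gamma,\gamma\rangle=1$, $\langle\gamma,n\rangle=\langle\gamma,t\rangle=\langle t,n\rangle=0$ and $\langle n,n\rangle=\delta$, one gets $\langle t,v\rangle=0$ and $\langle n,v\rangle=\varepsilon\delta/A$, whence
$$f_v^{(3)}(s_0) \;=\; -\,\tfrac{2\varepsilon}{A(s_0)}\,k_g'(s_0).$$

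\medskip

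Combining the two computations, $\alpha^{+}(s_0)$ is a singular point of the spherical focal curve if and only if $k_g'(s_0)=0$, which in turn happens if and only if $f_{\alpha^{+}(s_0)}$ has an $A_{\geq 3}$-singularity at $s_0$, i.e.\ $\alpha^{+}(s_0)\in\mathcal{C}$. Since $\alpha^{+}(s_0)\in S^{2}_{1}$ by construction, this identifies the singular locus of $\alpha^{+}$ with $S^{2}_{1}\cap\mathcal{C}$. The routine part is the sign bookkeeping with $\varepsilon,\delta$; the only conceptual point to check is that the vanishing coefficient $k_g'$ appears with a nonzero scalar multiplier in both computations, which is guaranteed by the transversality of $\gamma$ and $n$ (for the spherical derivative) and by the nonvanishing of $\varepsilon/A$ (for the third derivative of $f_v$).
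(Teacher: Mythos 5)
Your proof is correct and follows essentially the same route as the paper: the paper likewise computes $(\alpha^{+})'(s)=\frac{k_g'(s)}{(k_g^2+\delta)^{3/2}}\bigl(\delta\,\gamma(s)-\varepsilon\,k_g(s)\,n(s)\bigr)$ and identifies the singular points with the locus $k_g'(s_0)=0$, which it equates with the $A_{\geq 3}$ condition defining $\mathcal{C}$. The only difference is that you explicitly verify the assertion ``$f_v$ has an $A_{\geq3}$-singularity at $s_0$ if and only if $k_g'(s_0)=0$'' via the third-derivative computation $f_v^{(3)}(s_0)=-\tfrac{2\varepsilon}{A(s_0)}k_g'(s_0)$, which the paper states without proof.
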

\begin{proof}
 Observe that $f_v$ has singularity $A_{\geq3}$ at $s_0$ if and only if $k_{g}'(s_0)=0$,  equivalently  $\alpha^{+}(s_0)$ (and $\alpha^{-}(s_0)$)  is the singular point of the spherical focal curve, because \vspace{0.5cm}

$(\alpha^{+})'(s)=\dfrac{\delta(\gamma(s))k_g'(s)}{(k_g^2(s)+\delta(\gamma(s)))\sqrt{k_g^2(s)+\delta(\gamma(s)))}}\gamma(s)-\dfrac{\varepsilon(\gamma(s))
k_g(s)k_g'(s)}{(k_g^2(s)+\delta(\gamma(s)))\sqrt{k_g^2(s)+\delta(\gamma(s))}}n(s).$
\end{proof}
In the next proposition, we study the metric structure of the spherical focal curve of a spacelike curve and of a timelike curve.

\begin{prop} Away from the singular points,
\begin{itemize}
\item[(a)] the spherical focal curve of a spacelike curve is  timelike;

\item[(b)] the spherical focal curve of a timelike curve is spacelike.
\end{itemize}
\begin{proof}

%
%
(a)  Away from the singular points of $\alpha^{+}$, i.e., where $k_g'(s)\neq0$ we have that $\alpha^{+}$ is a timelike curve because
$$\langle(\alpha^{+})'(s),(\alpha^{+})'(s)\rangle=\frac{-(k_g')^2(s)}{(k_g^2(s)-1)^2}<0.$$

(b) This proof is analogous to the case (a).

\end{proof}
\end{prop}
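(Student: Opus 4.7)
The plan is to differentiate $\alpha^{+}(s)$ with respect to arc length and examine the sign of $\langle (\alpha^{+})'(s), (\alpha^{+})'(s)\rangle$ at points where $k_g'(s)\neq 0$. Setting $h(s)=\sqrt{k_g^{2}(s)+\delta}$ and using the Frenet--Serret type formulae $\gamma'=t$ and $n'=-\varepsilon k_g t$, a direct differentiation (essentially already carried out when identifying the singular points of $\alpha^{+}$) should give
$$(\alpha^{+})'(s)=\frac{\delta\, k_g'(s)}{h^{3}(s)}\gamma(s)-\frac{\varepsilon\, k_g(s)\, k_g'(s)}{h^{3}(s)}n(s),$$
because the two $t$-terms coming from $(k_g/h)\gamma'$ and $(\varepsilon/h)n'$ cancel; the coefficients of $\gamma$ and $n$ simplify using $h'=k_g k_g'/h$ and the identity $h^{2}-k_g^{2}=\delta$.

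Next I would exploit the orthogonality relations intrinsic to $S^{2}_{1}$: namely $\langle\gamma,\gamma\rangle=1$, $\langle n,n\rangle=\delta$, and $\gamma\perp n$ (the last because $n=\gamma\wedge t$). Substituting these into the inner product of $(\alpha^{+})'$ with itself yields
$$\langle (\alpha^{+})'(s), (\alpha^{+})'(s)\rangle=\frac{(k_g'(s))^{2}}{h^{6}(s)}\bigl(1+\delta k_g^{2}(s)\bigr)=\frac{\delta\,(k_g'(s))^{2}}{h^{4}(s)},$$
where the final equality uses $1+\delta k_g^{2}=\delta(k_g^{2}+\delta)=\delta h^{2}$. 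At a non-singular point ($k_g'\neq 0$), the sign of this quantity is therefore exactly the sign of $\delta$.

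The last step is to identify $\delta$ in each case from the causal character of $\gamma$. Since $\gamma\subset S^{2}_{1}$, the position vector $\gamma$ is always spacelike and orthogonal to $t=\gamma'$. In case (a), $t$ is spacelike, so $\mathrm{span}\{\gamma,t\}$ contains only spacelike vectors; hence $n=\gamma\wedge t$ is timelike and $\delta=-1$, giving $\langle (\alpha^{+})',(\alpha^{+})'\rangle=-(k_g')^{2}/(k_g^{2}-1)^{2}<0$, i.e.\ $\alpha^{+}$ is timelike. In case (b), $t$ is timelike so $\mathrm{span}\{\gamma,t\}$ is a timelike plane and $n$ must be spacelike, giving $\delta=1$ and $\langle (\alpha^{+})',(\alpha^{+})'\rangle=(k_g')^{2}/(k_g^{2}+1)^{2}>0$, i.e.\ $\alpha^{+}$ is spacelike.

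The one conceptual point beyond routine calculation is recognising that $\delta$ is forced by the causal type of $\gamma$ once we work on $S^{2}_{1}$; once this is noticed, both (a) and (b) come from the \emph{same} formula for $\langle (\alpha^{+})',(\alpha^{+})'\rangle$ with opposite signs of $\delta$. No further obstacle is expected — the argument reduces to the Frenet--Serret computation and one sign check.
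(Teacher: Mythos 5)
Your proposal is correct and follows essentially the same route as the paper: differentiate $\alpha^{+}$ using the Frenet--Serret type formulae (the derivative you obtain is exactly the one the paper displays when locating the singular points) and check the sign of $\langle(\alpha^{+})',(\alpha^{+})'\rangle$, which in case (a) reduces to the paper's formula $-(k_g')^{2}/(k_g^{2}-1)^{2}<0$. Your unified expression $\delta\,(k_g')^{2}/(k_g^{2}+\delta)^{2}$ and the explicit observation that the causal type of $\gamma$ forces $\delta=-1$ (spacelike case) or $\delta=+1$ (timelike case) merely make precise details the paper leaves implicit, including the ``analogous'' case (b).
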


We want to know what is happening at the lightlike points of  a curve $\gamma$. For this, let us find an expression of the bifurcation set near a lightlike point of $\gamma$. Here we cannot consider $\gamma:I\rightarrow S^2_1$ parametrised by the arc length and the focal set is not defined at the lightlike point.

  Let $ \gamma(t)$ and $N(t)=\gamma(t)\wedge \gamma'(t)$ be the vectors that generate the normal plane to the vector $\gamma'(t)$ and consider the family of distance squared functions $f:I\times S^2_1\rightarrow \mathbb{R}$ on $\gamma$. By definition, we have that the bifurcation set of $f$ is given by
$$\mathfrak{Bif}(f)=\{\pm\sqrt{1+\mu^2\langle\gamma'(t),\gamma'(t)\rangle}\gamma(t)+\mu N(t)\,|\, \mu\,\,\mbox{is\,\,the\,\, solution\,\, of\,\, the\,\, equation}\,\, (1^{\pm})\},$$ where
$\mu\langle\gamma(t)\wedge\gamma'(t),\gamma''(t)\rangle\pm\sqrt{1+\mu^2\langle\gamma'(t),\gamma'(t)\rangle}\langle\gamma(t),\gamma''(t)\rangle=0.\,\,\,\,\,\,\,\,(1^{\pm})$\\


 \begin{rem}\label{obs:denominador}
  Let $\gamma\in \Omega$ such that  $\gamma(t_0)$ is a  lightlike point of $\gamma$. In the next result, we use that $\langle\gamma(t_0)\wedge\gamma'(t_0),\gamma''(t_0)\rangle\neq0$.
Indeed,  if  $\langle\gamma(t_0)\wedge\gamma'(t_0),\gamma''(t_0)\rangle=0$ then exist  $a$, $b\in \mathbb{R}$ with $a^2+b^2\neq0$ such that  $\gamma''(t_0)=a\gamma(t_0)+b\gamma'(t_0)$, because $\gamma(t_0)$ and $\gamma'(t_0)$ are vectors  linearly independent.
By supposing $a\neq0$ and  $b=0$, then $\langle\gamma''(t_0),\gamma(t_0)\rangle=a\neq0$, that is a contradiction, since $\langle\gamma(t),\gamma''(t)\rangle=-\langle\gamma'(t),\gamma'(t)\rangle$.
Now suppose that $a=0$  and $b\neq0$, then $\langle\gamma''(t_0),\gamma'(t_0)\rangle=b\langle\gamma'(t_0),\gamma'(t_0)\rangle=0$, that is a contradiction, because $\gamma\in\Omega$.  For $a\neq0$ and $b\neq0$, we  get the same contradictions. Therefore, $\langle\gamma(t_0)\wedge\gamma'(t_0),\gamma''(t_0)\rangle\neq0$.
\end{rem}

Solving the equation $(1^{+})$ and using the fact that $\langle\gamma(t),\gamma''(t)\rangle=-\langle\gamma'(t),\gamma'(t)\rangle$, it follows that the solutions are $\mu(t)$ or $-\mu(t)$ where $$\mu(t)=\dfrac{\langle\gamma'(t),\gamma'(t)\rangle}{\sqrt{\langle\gamma(t)\wedge\gamma'(t),\gamma''(t)\rangle^2-\langle\gamma'(t),\gamma'(t)\rangle^3}}.$$

Observe that in the neighborhood of $t_0$, the term inside of the root  of the denominator  is greater than zero, because of Remark \ref{obs:denominador}  $\langle\gamma(t_0)\wedge\gamma'(t_0),\gamma''(t_0)\rangle\neq0$.

 If $\langle\gamma(t_0)\wedge\gamma'(t_0),\gamma''(t_0)\rangle>0$ then $\mu(t)$ is the solution of  $(1^{+})$ and $-\mu(t)$ is the solution of  $(1^{-})$. If $\langle\gamma(t_0)\wedge\gamma'(t_0),\gamma''(t_0)\rangle<0$ then $-\mu(t)$ is the solution of $(1^{+})$ and $\mu(t)$ is the solution of  $(1^{-})$.
Therefore, we have that  $\alpha^{+}(t)$ is a smooth curve and we can  rewrite
 $\alpha^{+}(t)$ as
 $$\sqrt{1+\mu^2(t)\langle\gamma'(t),\gamma'(t)\rangle}\gamma(t)+\mu(t) N(t)\,\,\hbox{if}\,\,\langle\gamma(t_0)\wedge\gamma'(t_0),\gamma''(t_0)\rangle>0\,\,\,\,\,\hbox{or}$$
$$\sqrt{1+\mu^2(t)\langle\gamma'(t),\gamma'(t)\rangle}\gamma(t)-\mu(t) N(t)\,\,\hbox{if}\,\, \langle\gamma(t_0)\wedge\gamma'(t_0),\gamma''(t_0)\rangle<0.$$

 The  above bifurcation set is contained in  $S^2_1$ and then we have a  spherical curve of $\gamma$ given by  $\mathfrak{Bif}(f)=\alpha^{+}\cup\alpha^{-}$, where $\alpha^{+}$ and $\alpha^{-}$ are symmetric.

\begin{prop}\label{proesfe}
The spherical  curve  $\alpha^{+}$ is a smooth curve that  intersects the curve $\gamma$ at the lightlike points of $\gamma$. The curve $\alpha^{-}$ does not intersect the curve $\gamma$, but it has the same geometry of $\alpha^{+}$, by symmetry.
\end{prop}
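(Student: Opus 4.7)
The plan is to exploit the fact that at a lightlike point $\gamma(t_0)\in S^2_1$ one has $\langle\gamma'(t_0),\gamma'(t_0)\rangle=0$ while $\langle\gamma(t_0)\wedge\gamma'(t_0),\gamma''(t_0)\rangle\neq 0$ by Remark \ref{obs:denominador}. Plugging this into the explicit formula for $\mu(t)$ gives $\mu(t_0)=0$, and substitution into the parametrisations of $\alpha^{\pm}$ yields $\alpha^{+}(t_0)=\gamma(t_0)$ and $\alpha^{-}(t_0)=-\gamma(t_0)\neq\gamma(t_0)$, the last inequality holding since $\gamma(t_0)\in S^{2}_1$ is a unit vector.

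For smoothness I would verify that the two radicands appearing in $\alpha^{\pm}$ are strictly positive on a neighbourhood of $t_0$. The inner one, $\langle\gamma\wedge\gamma',\gamma''\rangle^{2}-\langle\gamma',\gamma'\rangle^{3}$, specialises to $\langle\gamma(t_0)\wedge\gamma'(t_0),\gamma''(t_0)\rangle^{2}>0$ at $t_0$; a short algebraic rearrangement then rewrites $1+\mu^{2}\langle\gamma',\gamma'\rangle$ as the ratio $\langle\gamma\wedge\gamma',\gamma''\rangle^{2}/(\langle\gamma\wedge\gamma',\gamma''\rangle^{2}-\langle\gamma',\gamma'\rangle^{3})$, which reduces to $1$ at $t_0$. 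By continuity both quantities remain positive on a neighbourhood of $t_0$, so $\mu$ and $\alpha^{\pm}$ are smooth there.

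For the intersection claims I would pair with $\gamma(t)$. If $\alpha^{+}(t)=\gamma(t)$, rearranging and taking the pseudo-inner product with $\gamma(t)$ and using $\langle\gamma(t),\gamma(t)\rangle=1$ together with $\langle N(t),\gamma(t)\rangle=\langle\gamma(t)\wedge\gamma'(t),\gamma(t)\rangle=0$ forces $\sqrt{1+\mu(t)^{2}\langle\gamma'(t),\gamma'(t)\rangle}=1$, hence $\mu(t)^{2}\langle\gamma'(t),\gamma'(t)\rangle=0$; the explicit form of $\mu$ then shows this is equivalent to $\langle\gamma'(t),\gamma'(t)\rangle=0$, i.e.\ $\gamma(t)$ is lightlike. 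For $\alpha^{-}(t)=\gamma(t)$ the same pairing forces $-\sqrt{1+\mu^{2}\langle\gamma',\gamma'\rangle}=1$, which is impossible; combined with $\alpha^{-}(t_0)=-\gamma(t_0)$ and continuity this yields local non-intersection of $\alpha^{-}$ and $\gamma$ as subsets near $t_0$. The symmetry statement follows from the observation that the two cases in the definition of $\alpha^{\pm}$ give $\alpha^{-}(t)=-\alpha^{+}(t)$, and the antipodal map is an isometry of $S^{2}_1$, preserving all intrinsic and extrinsic invariants of the curve.

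The delicate point I expect to navigate is that $N(t_0)=\gamma(t_0)\wedge\gamma'(t_0)$ is itself lightlike at $t_0$: the Gram identity $\langle\gamma\wedge\gamma',\gamma\wedge\gamma'\rangle=-(\langle\gamma,\gamma\rangle\langle\gamma',\gamma'\rangle-\langle\gamma,\gamma'\rangle^{2})$, combined with $\langle\gamma,\gamma'\rangle=0$ on $S^{2}_1$, collapses at $t_0$ to $-\langle\gamma',\gamma'\rangle=0$. Consequently one cannot extract the coefficient of $N$ in the equation $\alpha^{\pm}(t)=\gamma(t)$ by projecting onto $N$; testing against the non-null vector $\gamma(t)$ sidesteps this obstruction and is the real engine driving both the intersection and the non-intersection statements.
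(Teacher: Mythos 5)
Your proof is correct and follows essentially the same route as the paper, which simply substitutes $t=t_0$ into the explicit local parametrisation $\alpha^{\pm}(t)=\pm\sqrt{1+\mu^2(t)\langle\gamma'(t),\gamma'(t)\rangle}\,\gamma(t)\pm\mu(t)N(t)$ derived just before the proposition, using $\mu(t_0)=0$ (which holds since $\langle\gamma'(t_0),\gamma'(t_0)\rangle=0$ and, by Remark \ref{obs:denominador}, the denominator of $\mu$ is nonzero). Your additional verifications --- positivity of the radicands near $t_0$ for smoothness, the pairing with $\gamma(t)$ to show the same-parameter intersection occurs \emph{only} at lightlike points and to exclude $\alpha^{-}$, and the observation that $N(t_0)$ is lightlike so one must test against $\gamma(t)$ rather than project onto $N$ --- are all sound and merely make explicit details the paper's one-line proof leaves to the reader.
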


\begin{proof}
Let $\gamma(t_0)$ be a lightlike point of $\gamma$. The parametrisation of the spherical  curve $\alpha^{+}$  locally at $t_0$, is
given as above and a proof of the proposition follows directly from the substitution $t=t_0$ at  $\alpha^{+}$.

\end{proof}

 Here, we have an example of a spherical curve $\alpha^{+}$ of the curve $\gamma(t)=(t^2-t,t^2+t,\sqrt{1-4t^3})$ in $S^3_1$.  We use the Maple software  to obtain  the complicated expression of $\alpha^{+}$ and of the surface  $\mathfrak{B}$ (that we omit here) and the Figure below. $\mathfrak{B}$ is the bifurcation set given in Section \ref{sec:fslig}.

\begin{figure}[H]
\center
\includegraphics[scale=0.7]{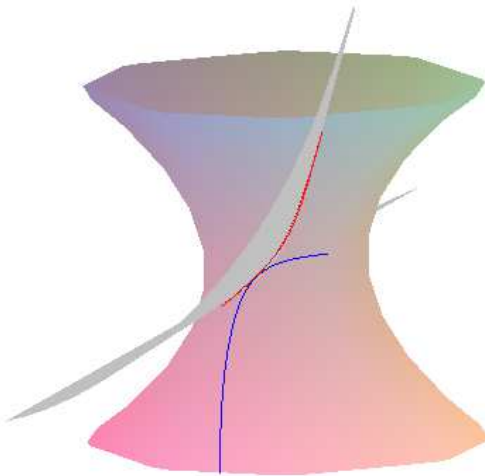}
\caption{Example of a spherical curve $\alpha^{+}$ (curve $\gamma$ is  blue, $\alpha^{+}$ is red and  $\mathfrak{B}$ is the gray surface).}
\end{figure}

\begin{rem}
The curve $\alpha^{-}$ does not intersect the curve $\gamma$, but $\alpha^{-}$ intersects $-\gamma$ at the lightlike point $-\gamma(t_0)$. The focal set of $\gamma$ and of $-\gamma$ are the same.
\end{rem}

\section{Focal set of curves in $S^3_1$}\label{sec:focals31}
In this section, we  consider curves in \emph{de Sitter space} $S^3_1\subset \mathbb{R}^4_1$ and we study the focal set in $S^3_1$ of these curves.  To obtain  the results for curves in the de Sitter space, we have Section \ref{sec:focals21} as the motivation.
Let $\gamma:I\rightarrow S^3_1$ be a  smooth and regular curve in  $S^3_1$. In the case where the  curve is spacelike or timelike, we can parametrise it by the arc length s. Thus, for the spacelike curve, we take the unit tangent vector  $t(s)=\gamma'(s)$.  Suppose that $\langle t'(s),t'(s)\rangle\neq 1$, then $\parallel t'(s)+\gamma(s)\parallel\neq0$, and we have other unit vector  $n(s)=\dfrac{t'(s)+\gamma(s)}{\parallel t'(s)+\gamma(s)\parallel}$. We also define  a unit vector by $e(s)=\gamma(s)\wedge t(s)\wedge n(s)$, then we have an orthonormal basis  $\{\gamma(s),t(s),n(s),e(s)\}$ of $\mathbb{R}^4_1$ along  $\gamma$. The Frenet-Serret  type formulae of a spacelike curve in $S^3_1$ (see \cite{Fusho}), are given by

$$\left\{\begin{aligned}
     \gamma'(s) &=t(s) \\
      t'(s) &= -\gamma(s)+k_g(s)\,n(s) \\
    n'(s) &=-\delta(\gamma(s))\,k_g(s)\,t(s)+\tau_g(s)\,e(s)\\
  e'(s) &=\tau_g(s)n(s)
\end{aligned}\right.,$$

\noindent where $\delta(\gamma(s))=sign(n(s))$, $k_g(s)=\parallel t'(s)+\gamma(s)\parallel$, $\tau_g(s)=\dfrac{\delta(\gamma(s))}{k_g^2(s)}\det(\gamma(s),\gamma'(s),\gamma''(s),$ $\gamma'''(s))$, and $\det$ is the determinant of the $4\times4$ matrix. Here $k_g$ is called the geodesic curvature and $\tau_g$  the geodesic torsion of $\gamma$ (see \cite{Fusho}).

Since $\langle t'(s)+\gamma(s),t'(s)+\gamma(s)\rangle=\langle t'(s),t'(s)\rangle-1$, the condition $\langle t'(s),t'(s)\rangle\neq 1$ is equivalent to the condition $k_g(s)\neq0$.

If the curve is timelike, we take the unit tangent vector $t(s)=\gamma'(s)$. By supposing a generic condition $\langle t'(s),t'(s)\rangle\neq 1$, then $\parallel t'(s)-\gamma(s)\parallel\neq0$, and we have other unit vector  $n(s)=\dfrac{t'(s)-\gamma(s)}{\parallel t'(s)-\gamma(s)\parallel}$. We also define an unit vector by $e(s)=\gamma(s)\wedge t(s)\wedge n(s)$, then we have an orthonormal basis $\{\gamma(s),t(s),n(s),e(s)\}$ of $\mathbb{R}^4_1$ along $\gamma$. Thus, the Frenet-Serret type formulae of a timelike curve in $S^3_1$ are given by

  $$\left\{
   \begin{aligned}
     \gamma'(s) &=t(s) \\
      t'(s) &= \gamma(s)+k_h(s)\,n(s)\\
    n'(s) &= k_h(s)\,t(s)+\tau_h(s)\,e(s)\\
  e'(s) &=-\tau_h(s)\,n(s)
   \end{aligned}
 \right.,$$

\noindent where $k_h(s)=\parallel t'(s)-\gamma(s)\parallel$ and $\tau_g(s)=-\dfrac{1}{k_h^2(s)}\det(\gamma(s),\gamma'(s),\gamma''(s),\gamma'''(s))$. Here $k_h$ is called the hyperbolic curvature and $\tau_h$ the  hyperbolic torsion of $\gamma$ (see \cite{kh}).

Since $\langle t'(s)-\gamma(s),t'(s)-\gamma(s)\rangle=\langle t'(s),t'(s)\rangle-1$, the condition $\langle t'(s),t'(s)\rangle\neq 1$ is equivalent to the condition $k_h(s)\neq0$.

Consider the family of distance squared functions, $f:I\times S^3_1\rightarrow\mathbb{R}$, on $\gamma$ $$f(s,v)=\langle\gamma(s)-v,\gamma(s)-v\rangle,$$

\noindent where $f_v(s)=f(s,v)$, for some $v\in S^3_1$ fixed. Observe that since $v\in S^3_1$, then $-\dfrac{1}{2}f_v(s)=\langle\gamma(s),v\rangle-1$ and the singularities of the distance squared function and the height function are the same.

 The \emph{spherical bifurcation set} of $f$ is given by $$\mathfrak{Bif}(f)=\{v\in S^{3}_1\,\,|\,\,f_v'(s)=f_v''(s)=0\,\, \mbox{at}\,\, (s,v)\,\,\mbox{for\,\,some}\,\, s\},$$ i.e., the directions where the singularity of $f$ at $s$ is $A_{\geq2}$. This is also defined for the lightlike points of $\gamma$.

 The spherical focal surface of $\gamma$ coincides with the spherical bifurcation set of $f$. Furthermore, for curves in $S^3_1\subset\mathbb{R}^4_1$ the spherical focal surface is the intersection of the focal hypersurface in $\mathbb{R}^4_1$ with the de Sitter space $S^3_1$.

For a spacelike curve $\gamma$  parametrised by the arc length with $k_g(s)\neq 0$, we have that the spherical focal surface of  $\gamma$ is given by

 $$\mathfrak{B}^{\pm}(s,\mu)=\mu\gamma(s)+\frac{\mu}{\delta(\gamma(s))k_g(s)}n(s)\pm\frac{\sqrt{-\delta(\gamma(s))k_g^2(s)+
\delta(\gamma(s))\mu^2(k_g^2(s)+\delta(\gamma(s))}}{k_g(s)}e(s),$$ with $\mu\in\mathbb{R}$. The \emph{g-spherical cuspidal curve} is given by $\mathfrak{B}^{\pm}(s,\mu(s))=\mathfrak{B}^{\pm}(s)$, where $$\mu(s)=\frac{\pm\tau_g(s)k_g^2(s)}{\sqrt{\tau_g^2(s)k_g^4(s)-k_g'^2(s)\delta(\gamma(s))-\tau_g^2(s)k_g^2(s)\delta(\gamma(s))}}.$$

\noindent For a timelike curve $\gamma$ parametrised by the arc length with $k_h(s)\neq 0$, the spherical focal surface is given by
$$\mathfrak{B}^{\pm}(s,\mu)=\mu\gamma(s)-\frac{\mu}{k_h(s)}n(s)\pm\frac{\sqrt{k_h^2(s)-\mu^2(k_h^2(s)+1)}}{k_h(s)}e(s),$$ with $\mu\in\mathbb{R}$. The \emph{h-spherical cuspidal curve} is given by $\mathfrak{B}^{\pm}(s,\mu(s))=\mathfrak{B}^{\pm}(s)$, where $$\mu(s)=\frac{\pm\tau_h(s)k_h^2(s)}{\sqrt{\tau_h^2(s)k_h^4(s)+k_h'^2(s)+\tau_h^2(s)k_h^2(s)}}.$$

\begin{rem}
The spherical focal surface of a spacelike curve is defined if $-\delta(\gamma(s))k_g^2(s)+
\delta(\gamma(s))\mu^2(k_g^2(s)+\delta(\gamma(s))\geq0$. As $k_g(s)\neq0$, we have that $n(s)$ is spacelike or timelike. In the case where $n(s)$ is spacelike, the spherical focal surface is defined for $$\mu\leq-\frac{k_g(s)}{\sqrt{k_g^2(s)+1}}\,\,\,or\,\,\,\mu\geq\frac{k_g(s)}{\sqrt{k_g^2(s)+1}},$$

\noindent otherwise in the case that $n(s)$ is timelike, the spherical focal surface is  defined for $$-\frac{k_g(s)}{\sqrt{k_g^2(s)-1}}\leq\mu\leq\frac{k_g(s)}{\sqrt{k_g^2(s)-1}}.$$
  The spherical focal surface of a timelike curve is defined if $$-\frac{k_h(s)}{\sqrt{k_h^2(s)+1}}\leq\mu\leq\frac{k_h(s)}{\sqrt{k_h^2(s)+1}}.$$ Furthermore, in both cases where $\mathfrak{B}^{+}(s,\mu)$ and $\mathfrak{B}^{-}(s,\mu)$ are symmetric, then we study only $\mathfrak{B}^{+}(s,\mu)$.

\end{rem}

We prove in the next results that the tangent plane of the spherical focal surface of a spacelike curve (respectively, timelike) is not defined at the points of the $g$-spherical cuspidal curve (respectively, of the $h$-spherical cuspidal curve). Furthermore, away from these curves we analyse  the metric structure of the spherical focal surface in each case.

 Away from the $g$-spherical cuspidal curve,  $$\mu(s)\neq\frac{\pm\tau_g(s)k_g^2(s)}{\sqrt{\tau_g^2(s)k_g^4(s)-k_g'^2(s)\delta(\gamma(s))-\tau_g^2(s)k_g^2(s)\delta(\gamma(s))}},$$ or away from the  $h$-spherical cuspidal curve,  $\mu(s)\neq\dfrac{\pm\tau_h(s)k_h^2(s)}{\sqrt{\tau_h^2(s)k_h^4(s)+k_h'^2(s)+\tau_h^2(s)k_h^2(s)}}$,
$v=\lambda_1\mathfrak{B}^{+}_s+\lambda_2\mathfrak{B}^{+}_{\mu}$, with $\lambda_1, \lambda_2\in\mathbb{R}$, are the vectors of the tangent plane of the spherical focal surface at $\mathfrak{B}^{+}(s,\mu)$ and $\langle v,v\rangle=\lambda_1^2\langle\mathfrak{B}^{+}_s,
\mathfrak{B}^{+}_s\rangle+2\lambda_1\lambda_2\langle\mathfrak{B}^{+}_s,\mathfrak{B}^{+}_{\mu}\rangle+\lambda_2^2\langle\mathfrak{B}^{+}_{\mu},
\mathfrak{B}^{+}_{\mu}\rangle$, by using the respective parametrisation of $\mathfrak{B}(s,\mu)$ for spacelike or timelike $\gamma$.

\begin{prop} \label{prop:gspherical} Let $\gamma$ be a spacelike curve.
\begin{itemize}
\item[(a)] The tangent plane of the spherical focal surface of $\gamma$ is not defined on the  $g$-spherical cuspidal curve.

\item[(b)] Away from the g-spherical cuspidal curve, the spherical focal surface of $\gamma$ is timelike.
\end{itemize}
\begin{proof}
(a) Considering a spacelike curve, the tangent plane at the points of the spherical focal surface is generated by the vectors

\begin{align*}
\mathfrak{B}^{+}_s(s,\mu)&= \left(\frac{-\mu k_g'(s)+\delta(\gamma(s)) \tau_g(s) k_g(s)\sqrt{-\delta(\gamma(s)) k_g^2(s)+
\delta(\gamma(s))\mu^2(k_g^2(s)+\delta(\gamma(s))})}{\delta k_g^2(s)}\right)n(s)\\&+\left(\frac{\mu\tau_g(s) k_g(s)\sqrt{-\delta(\gamma(s)) k_g^2(s)+
\delta(\gamma(s))\mu^2(k_g^2(s)+\delta(\gamma(s))})-\delta(\gamma(s)\mu^2k_g'(s))}{\delta(\gamma(s)) k_g^2(s)\sqrt{-\delta(\gamma(s)) k_g^2(s)+
\delta(\gamma(s))\mu^2(k_g^2(s)+\delta(\gamma(s))})}\right)e(s)\hspace{0.8cm}
 \end{align*}
and
$$\mathfrak{B}^{+}_\mu(s,\mu)=\gamma(s)+ \frac{1}{\delta(\gamma(s))k_g(s)}n(s)+\left(\frac{\delta(\gamma(s))\mu(k_g^2(s)+\delta(\gamma(s)))}{k_g(s)\sqrt{-\delta(\gamma(s))k_g^2(s)
+\delta(\gamma(s))\mu^2(k_g^2(s)+\delta(\gamma(s))}}\right)e(s).$$

The vectors $\mathfrak{B}^{+}_s$ and $\mathfrak{B}^{+}_\mu$ are linearly dependent if and only if $$\mu(s)=\frac{\pm\tau_g(s)k_g^2(s)}{\sqrt{\tau_g^2(s)k_g^4(s)-k_g'^2(s)\delta(\gamma(s))-\tau_g^2(s)k_g^2(s)\delta(\gamma(s))}}$$ that is precisely where the tangent plane is not  defined and furthermore is where $f_v$ has singularities of type  $A_{\geq3}$, that is the  $g$-spherical cuspidal curve.

(b)
Let $\gamma$ be a spacelike curve. Let us suppose that $n(s)$ is spacelike and $e(s)$ is timelike, thus
making $\langle v,v\rangle=0$, and thinking of this equation as a quadratic equation, then
$$\Delta=4\lambda_2^2\frac{(\tau_gk_g\sqrt{-k_g^2+\mu^2(k_g^2+1)}-\mu k_g')^2}{k_g^2(-k_g^2+\mu^2(k_g^2+1))}.$$

The tangent plane generated by $\mathfrak{B}^{+}_s$ and $\mathfrak{B}^{+}_\mu$ can be lightlike, if $\Delta=0$.
As we are supposing $\tau_g(s)k_g(s)\sqrt{-k_g^2(s)+\mu^2(k_g^2(s)+1)}-\mu k_g'(s)\neq0$, for  $\mathfrak{B}^{+}_s$ and $\mathfrak{B}^{+}_\mu$ be linearly independent, then we have $\Delta=0$ if and only if $\lambda_2=0$, that is, if $\mathfrak{B}^{+}_s$ is lightlike. But,  $\mathfrak{B}^{+}_s(s,\pm1)$ are the only lightlike vectors. Besides $\mathfrak{B}^{+}_{\mu}(s,\pm1)$ are timelike vectors, i.e., the tangent planes  at the points $(s,\pm1)$ are timelike.  Since, we have  $\Delta>0$ at the others points, thus the spherical focal surface is timelike.
\end{proof}
\end{prop}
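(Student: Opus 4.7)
The plan is to differentiate the parametrisation $\mathfrak{B}^+(s,\mu)$ in both variables using the spacelike Frenet--Serret frame in $S^3_1$, and then read off (a) the linear dependence of the partial derivatives as an algebraic identity in $\mu$, and (b) the causal character of the tangent plane from the sign of a discriminant.

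For (a), I would differentiate $\mathfrak{B}^+$ with respect to $s$, using $\gamma'=t$, $t'=-\gamma+k_g n$, $n'=-\delta k_g t+\tau_g e$ and $e'=\tau_g n$. Because the focal surface is defined by $f_v'(s)=0$, which forces $v-\gamma$ into the normal plane to $t$, the $\gamma$- and $t$-components of $\mathfrak{B}^+_s$ should cancel, leaving a combination of $n$ and $e$ whose coefficients I would record. The vector $\mathfrak{B}^+_\mu$ is read off directly from the parametrisation and carries a nonzero $\gamma$-component. Since $\gamma$ does not appear in $\mathfrak{B}^+_s$, linear dependence of the two partials forces $\mathfrak{B}^+_s=0$, and setting each of its remaining scalar coefficients to zero reduces to a single algebraic equation in $\mu$. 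Solving this equation should yield exactly
\[
\mu(s)=\frac{\pm\,\tau_g(s)\,k_g^2(s)}{\sqrt{\tau_g^2(s)k_g^4(s)-k_g'^{\,2}(s)\,\delta(\gamma(s))-\tau_g^2(s)k_g^2(s)\,\delta(\gamma(s))}},
\]
that is, the $g$-spherical cuspidal curve. Since these are also the parameter values where $f_v$ develops an $A_{\geq 3}$ singularity, the parametrisation fails to be an immersion precisely there and no tangent plane can be defined.

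For (b), I would work off the cuspidal curve, where $\mathfrak{B}^+_s$ and $\mathfrak{B}^+_\mu$ span an honest $2$-plane, and analyse the quadratic form
\[
Q(\lambda_1,\lambda_2)=\langle \lambda_1\mathfrak{B}^+_s+\lambda_2\mathfrak{B}^+_\mu,\;\lambda_1\mathfrak{B}^+_s+\lambda_2\mathfrak{B}^+_\mu\rangle.
\]
Using the coefficients from (a), I would split into the two subcases $n$ spacelike ($\delta=1$, $e$ timelike) and $n$ timelike ($\delta=-1$, $e$ spacelike), compute the three coefficients of $Q$, and then evaluate the discriminant $\Delta$ of $Q$ regarded as a quadratic in $\lambda_1$. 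The expected shape is
\[
\Delta \;=\; (\text{positive factor})\cdot\bigl(\tau_g k_g\sqrt{-\delta k_g^2+\delta\mu^2(k_g^2+\delta)}\,-\,\mu k_g'\bigr)^{2},
\]
so $\Delta>0$ away from the cuspidal curve, which produces two linearly independent lightlike directions in each tangent plane. This forces the induced metric to have signature $(+,-)$, that is, the focal surface is timelike.

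The principal obstacle will be the algebraic bookkeeping around the square-root term: differentiating $\sqrt{-\delta k_g^2+\delta\mu^2(k_g^2+\delta)}$ in $s$ produces $k_g'$- and $\delta k_g k_g'$-contributions that must recombine cleanly with the $\tau_g$-terms coming from $n'$ and $e'$ so that the coefficients of $Q$ arrange themselves into the perfect square indicated above. A secondary check is that the $\delta=-1$ subcase really mirrors $\delta=1$: several signs flip, but the structure of $Q$, and hence the sign of $\Delta$, should survive intact once the correct branch of the square root is tracked.
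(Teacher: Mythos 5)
Your plan is, in substance, the paper's own proof. For (a) the paper computes exactly the partials you describe: $\mathfrak{B}^{+}_s$ indeed has only $n$- and $e$-components (the $\gamma$- and $t$-parts cancel just as you predict), while $\mathfrak{B}^{+}_\mu$ carries the $\gamma$-coefficient $1$, and your observation that linear dependence therefore forces $\mathfrak{B}^{+}_s=0$ is a cleaner way of organising what the paper does by brute force; writing $R=-\delta k_g^2+\delta\mu^2(k_g^2+\delta)$, the $e$-numerator of $\mathfrak{B}^{+}_s$ is $\delta\mu$ times the $n$-numerator, so everything collapses to the single equation $-\mu k_g'+\delta\tau_g k_g\sqrt{R}=0$, whose solution is precisely the asserted $\mu(s)$ of the $g$-spherical cuspidal curve. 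For (b) your predicted shape of the discriminant is exactly what the paper obtains in the subcase $\delta=1$, namely $\Delta=4\lambda_2^2\bigl(\tau_g k_g\sqrt{R}-\mu k_g'\bigr)^2/(k_g^2R)$, and your expectation that the $\delta=-1$ subcase mirrors this is correct: the analogous computation gives $\Delta=4\lambda_2^2\bigl(\tau_g k_g\sqrt{R}+\mu k_g'\bigr)^2/(k_g^2R)$, with the squared factor again vanishing exactly on the cuspidal curve. (In fact you are more thorough here than the paper, which only writes out the $n$-spacelike subcase.)

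There is one concrete step that fails as you stated it: ``$\Delta>0$ away from the cuspidal curve, which produces two linearly independent lightlike directions in each tangent plane.'' This breaks down at $\mu=\pm1$, which lies in the interior of the domain of $\mathfrak{B}^{+}$ in both subcases. A short computation gives
$$\langle\mathfrak{B}^{+}_s,\mathfrak{B}^{+}_s\rangle=\frac{\bigl(-\mu k_g'+\delta\tau_g k_g\sqrt{R}\bigr)^2(\mu^2-1)}{k_g^2\,R}\cdot\delta,$$
so at $\mu=\pm1$ the vector $\mathfrak{B}^{+}_s$ is itself lightlike; the leading coefficient of your quadratic $Q$ in $\lambda_1$ vanishes there, $Q$ degenerates to a linear equation, and the discriminant-plus-two-roots count no longer delivers two lightlike directions (the lightlike direction $\lambda_2=0$ is not seen as a root of $Q$ in $\lambda_1$). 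The conclusion survives, but it needs the separate argument the paper supplies: at $(s,\pm1)$ the tangent plane contains the lightlike vector $\mathfrak{B}^{+}_s$ together with the timelike vector $\mathfrak{B}^{+}_\mu$ (for $\delta=1$ one finds $\langle\mathfrak{B}^{+}_\mu,\mathfrak{B}^{+}_\mu\rangle=-(k_g^2+1)/R<0$), and a Lorentzian $2$-plane containing a lightlike and a timelike direction is timelike. Add that case distinction and your proof is complete and matches the paper's.
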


\begin{prop}\label{prop:hspherical} Let $\gamma$ be a timelike curve.
\begin{itemize}
\item[(a)]  The tangent plane of the spherical focal surface of $\gamma$ is not defined on the $h$-spherical cuspidal curve.

\item[(b)] Away from the h-spherical cuspidal curve, the spherical focal surface of $\gamma$ is spacelike.
\end{itemize}

\begin{proof}
 The proofs are analogous to the proofs of  Proposition \ref{prop:gspherical}. In case (b), we observe that $\mathfrak{B}_s^{+}(s,\pm1)$ are not defined, then we prove that $\Delta<0$ for the equation $\langle v,v\rangle=0$.
%
%
%

\end{proof}
\end{prop}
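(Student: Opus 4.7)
The plan is to mirror the proof of Proposition \ref{prop:gspherical} but with the timelike Frenet--Serret apparatus, and then exploit a structural feature peculiar to the timelike case to avoid a long discriminant computation in part (b). First I would differentiate
$$\mathfrak{B}^{+}(s,\mu)=\mu\gamma(s)-\frac{\mu}{k_h(s)}n(s)+\frac{R(s)}{k_h(s)}e(s),\qquad R(s):=\sqrt{k_h^2(s)-\mu^2(k_h^2(s)+1)},$$
using $\gamma'=t$, $t'=\gamma+k_h n$, $n'=k_h t+\tau_h e$, $e'=-\tau_h n$. A short computation shows that the $t$-components in $\mathfrak{B}^{+}_s$ cancel, so
$$\mathfrak{B}^{+}_s=\frac{\mu k_h'-\tau_h k_h R}{k_h^2}\,n+\frac{\mu^2 k_h'-\mu\tau_h k_h R}{k_h^2 R}\,e,\qquad \mathfrak{B}^{+}_\mu=\gamma-\frac{1}{k_h}\,n-\frac{\mu(k_h^2+1)}{k_h R}\,e.$$

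For part (a), since $\mathfrak{B}^{+}_\mu$ has a nonzero $\gamma$-component while $\mathfrak{B}^{+}_s$ has none, linear dependence of the two vectors forces $\mathfrak{B}^{+}_s=0$; equivalently both coefficients in its $\{n,e\}$ expansion vanish, and both conditions reduce to $\mu k_h'=\tau_h k_h R$. Squaring and substituting $R^2=k_h^2-\mu^2(k_h^2+1)$ gives
$$\mu^2=\frac{\tau_h^2 k_h^4}{(k_h')^2+\tau_h^2 k_h^2+\tau_h^2 k_h^4},$$
which is exactly the parameter value defining the $h$-spherical cuspidal curve. Together with the fact (from the earlier analysis preceding Proposition~\ref{prop:hspherical}) that $f_v$ has singularity of type $A_{\geq 3}$ precisely there, this yields (a).

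For part (b), the key observation is that $\mathfrak{B}^{+}_s$ and $\mathfrak{B}^{+}_\mu$ both lie in $\mathrm{span}\{\gamma,n,e\}$, since the $t$-components canceled. Because $\gamma$ is a timelike curve in $S^3_1$ its tangent $t$ is timelike, so in the Lorentzian signature $(-,+,+,+)$ of the orthonormal frame $\{\gamma,t,n,e\}$ the three vectors $\gamma,n,e$ are all spacelike; hence their span is a spacelike $3$-plane, and any $2$-plane inside it is spacelike. Away from the $h$-spherical cuspidal curve these two vectors are independent, so the tangent plane of $\mathfrak{B}^{+}$ is a spacelike $2$-plane, proving (b). To keep the write-up in lockstep with Proposition~\ref{prop:gspherical}, one can alternatively verify directly that the discriminant $\Delta$ of the quadratic $\langle \lambda_1\mathfrak{B}^{+}_s+\lambda_2\mathfrak{B}^{+}_\mu,\,\lambda_1\mathfrak{B}^{+}_s+\lambda_2\mathfrak{B}^{+}_\mu\rangle=0$ in $\lambda_1$ is strictly negative on the admissible range $\mu^2<k_h^2/(k_h^2+1)$, noting (as in the author's sketch) that the exceptional boundary value $\mu=\pm 1$ that appeared in the spacelike case here lies outside the domain of definition of $\mathfrak{B}^{+}_s$, so no special points need to be treated separately. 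The only real obstacle is bookkeeping in the direct discriminant route; the subspace argument sidesteps it entirely.
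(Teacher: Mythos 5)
Your computations check out: with $R^2=k_h^2-\mu^2(k_h^2+1)$ one indeed gets $\bigl(R/k_h\bigr)_s=\mu^2k_h'/(k_h^2R)$, the $t$-components of $\mathfrak{B}^{+}_s$ cancel, and $\mathfrak{B}^{+}_s=\frac{\mu k_h'-\tau_h k_h R}{k_h^2}\bigl(n+\frac{\mu}{R}\,e\bigr)$; so your part (a) runs exactly as the paper intends by analogy with Proposition \ref{prop:gspherical}: since $\mathfrak{B}^{+}_\mu$ has $\gamma$-component $1$ while $\mathfrak{B}^{+}_s$ has none, linear dependence forces $\mathfrak{B}^{+}_s=0$, i.e. $\mu k_h'=\tau_h k_h R$, whose squared form is precisely the $h$-spherical cuspidal value of $\mu$ (squaring merges the two signs, which matches the $\pm$ in the paper's formula for $\mu(s)$). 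For part (b), however, you take a genuinely different and slicker route. The paper's proof, per its sketch, mirrors Proposition \ref{prop:gspherical}: expand $\langle v,v\rangle$ for $v=\lambda_1\mathfrak{B}^{+}_s+\lambda_2\mathfrak{B}^{+}_\mu$ as a quadratic and verify by direct computation that its discriminant satisfies $\Delta<0$, noting only that the exceptional values $\mu=\pm1$ of the spacelike case do not arise here because $R^2=-1<0$ there, so $\mathfrak{B}^{+}_s(s,\pm1)$ is undefined — a point you also observe. You instead note that both partials lie in $\mathrm{span}\{\gamma,n,e\}=t^{\perp}$, and since $t$ is the unique timelike vector of the orthonormal frame $\{\gamma,t,n,e\}$ (the timelike Frenet equations carry no $\delta$ precisely because $n$ and $e$ are forced spacelike), this $3$-plane is positive definite; hence every tangent plane of $\mathfrak{B}^{+}$ is automatically spacelike, and $\Delta<0$ follows from strict Cauchy--Schwarz in a Euclidean subspace rather than from bookkeeping. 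Your argument buys a conceptual explanation of why the timelike case is cleaner than the spacelike one, where one of $n,e$ is timelike and $t^{\perp}$ is Lorentzian, so the discriminant analysis there genuinely has lightlike exceptional directions. Two cosmetic caveats, neither a gap: your $R$ depends on $\mu$ as well as $s$ (you do differentiate it correctly when forming $\mathfrak{B}^{+}_\mu$), and on the boundary $\mu^2=k_h^2/(k_h^2+1)$, where $R=0$, the partials blow up — a locus that the paper likewise leaves aside, and which does not affect the statement as formulated.
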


Now our aim  is to find a general expression for the $\mathfrak{Bif}(f)$ to know what is happening to the spherical focal surface near a  lightlike point of $\gamma$. For this, consider the curve  $\gamma$  not parametrised by the arc length and  a vector $N(t)$ such that  $\gamma(t)$, $N(t)$, and $ E(t)=\gamma(t)\wedge \gamma'(t)\wedge N(t)$ generate the normal hyperplane to the vector $\gamma'(t)$. By definition, we have that $\mathfrak{Bif}(f)$ of $\gamma$ is a spherical  surface of $\gamma$ given by
$$\mathfrak{B}^{\pm}(t,\mu)=\mu\gamma(t)+\beta(t,\mu) N(t)+\lambda(t,\mu)E(t),$$ where  $\beta$ and $\lambda$ satisfies the  equations below:

$$\lambda(t,\mu)=\frac{\mu\langle\gamma'(t),\gamma'(t)\rangle-\beta\langle\gamma''(t), N(t)\rangle}{\langle\gamma''(t),E(t)\rangle}\hspace{0.8cm} and$$
 \noindent $\beta(t,\mu)$ is equal to

$\left(\frac{\mu\langle\gamma',\gamma'\rangle\langle\gamma'', N\rangle\langle E, E\rangle\pm\sqrt{\langle\gamma'', N\rangle^2\langle E, E\rangle\langle \gamma'',E\rangle^2(1-\mu^2)+\langle N, N\rangle\langle\gamma'', E\rangle^4(1-\mu^2)-\langle N,N\rangle\langle \gamma'',E\rangle^2\langle\gamma',\gamma'\rangle^2\langle E,E\rangle\mu^2}}{\langle\gamma'', E\rangle^2\langle N, N\rangle^2+\langle E, E\rangle\langle \gamma'', N\rangle^2}\right)(t)$

\noindent where $\mu$ is real number such that the root of $\beta$ is defined.
\begin{rem}
The spherical  surface $\mathfrak{B}^{\pm}$  is well defined near a lightlike point $\gamma(t_0)$. Let
 $R(t,\mu)=A(t)\mu^2 +B(t)$, the term inside the squared root of the above $\beta$, where
  \begin{align*}
  A(t)&=(-\langle\gamma'', N\rangle^2\langle E, E\rangle\langle \gamma'',E\rangle^2-\langle N, N\rangle\langle \gamma'',E\rangle^4-\langle N,N\rangle\langle \gamma'',E\rangle^2\langle\gamma',\gamma'\rangle^2\langle E,E\rangle)(t)\\
  \mbox{and}\hspace{0.4cm}B(t)&=(\langle\gamma'',E\rangle^2\langle  \gamma'',N\rangle^2 \langle E,E\rangle+\langle N, N\rangle\langle \gamma'',E\rangle^4)(t).
  \end{align*}
   Then for the spherical  surface $\mathfrak{B}^{\pm}$  be defined, we must have $R(t,\mu)\geq0$. Making the calculations at the lightlike point we have $A(t_0)<0$ and $B(t_0)>0$(these are equal in module) and in this case the spherical  surface $\mathfrak{B}^{\pm}$ is defined when $R(t_0,\mu)=A(t_0)\mu^2 +B(t_0)\geq0$, i.e., $-1\leq\mu\leq1$. Thus, for  continuity, there is a neighborhood near the $(t_0,\mu)$ such that  $R(t,\mu)\geq0$.

 \end{rem}

\begin{prop}
The spherical surface $\mathfrak{B}^{\pm}$  intersects the curve $\gamma$ at  lightlike points and the tangent planes to $\mathfrak{B}^{\pm}$  are not defined at these points.
\end{prop}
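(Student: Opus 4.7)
The plan splits naturally into two parts: verifying the intersection and verifying that the tangent plane is not defined. Both rest on the same elementary observation about the auxiliary quantity $R(t,\mu)=A(t)\mu^{2}+B(t)$ that sits under the square root in the formula for $\beta$.

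First I would substitute $t=t_{0}$ into the expressions for $\beta(t,\mu)$ and $\lambda(t,\mu)$. Since $\langle\gamma'(t_{0}),\gamma'(t_{0})\rangle=0$ at the lightlike point, the first summand in the numerator of $\beta$ drops out, leaving $\beta(t_{0},\mu)=\pm\sqrt{R(t_{0},\mu)}/D(t_{0})$, where $D(t)=\langle\gamma'',E\rangle^{2}\langle N,N\rangle^{2}+\langle E,E\rangle\langle\gamma'',N\rangle^{2}$ is the (smooth, nonzero) denominator. The remark preceding the proposition gives $A(t_{0})=-B(t_{0})$ with $B(t_{0})>0$, so $R(t_{0},\mu)=B(t_{0})(1-\mu^{2})$, and hence $\beta(t_{0},\mu)=0$ exactly when $\mu=\pm 1$. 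Using $\langle\gamma'(t_{0}),\gamma'(t_{0})\rangle=0$ once more in the formula for $\lambda$ yields $\lambda(t_{0},\pm 1)=0$. Plugging back into the parametrisation gives $\mathfrak{B}^{\pm}(t_{0},1)=\gamma(t_{0})$ and $\mathfrak{B}^{\pm}(t_{0},-1)=-\gamma(t_{0})$, which establishes the asserted intersection with $\gamma$ at the lightlike point (the second equality is the analogous intersection with $-\gamma$, matching the pattern observed in the previous section for $S^{2}_{1}$).

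For the failure of the tangent plane at $(t_{0},1)$, and symmetrically at $(t_{0},-1)$, I would compute
\begin{equation*}
\frac{\partial\mathfrak{B}^{\pm}}{\partial\mu}(t,\mu)=\gamma(t)+\beta_{\mu}(t,\mu)\,N(t)+\lambda_{\mu}(t,\mu)\,E(t).
\end{equation*}
The partial derivative $\beta_{\mu}$ contains the term $\pm R_{\mu}(t,\mu)/\bigl(2\sqrt{R(t,\mu)}\,D(t)\bigr)$ produced by differentiating the square root. At $(t_{0},\pm 1)$ we have $\sqrt{R}\to 0$ while $R_{\mu}(t_{0},\pm 1)=2A(t_{0})\mu=\mp 2B(t_{0})\neq 0$, so $\beta_{\mu}$ blows up; the same singular factor propagates into $\lambda_{\mu}$ through its defining relation. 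Consequently $\mathfrak{B}^{\pm}_{\mu}(t_{0},\pm 1)$ does not exist as a finite vector, the two partials cannot span a plane, and the tangent plane is not defined at these points.

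The main obstacle I foresee is the bookkeeping required to check that none of the smooth factors ($D(t_{0})$, $\langle\gamma''(t_{0}),E(t_{0})\rangle$, and so on) vanish, so that the blow-up of $\beta_{\mu}$ and $\lambda_{\mu}$ is genuine rather than a removable singularity. This is handled exactly as in the preceding remark: the hypothesis $\gamma\in\Omega$ together with the lightlike condition delivers both the crucial equality $A(t_{0})=-B(t_{0})$ and the nonvanishing of the relevant coefficients at $t_{0}$, and thereby legitimises interpreting $(t_{0},\pm 1)$ as a genuine boundary singularity of the parametrisation $\mathfrak{B}^{\pm}$.
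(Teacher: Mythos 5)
Your proposal is correct and follows essentially the same route as the paper: evaluate $\beta$ and $\lambda$ at $(t_0,\pm 1)$ using $\langle\gamma'(t_0),\gamma'(t_0)\rangle=0$ and the remark's identity $A(t_0)=-B(t_0)$ to get $\mathfrak{B}^{\pm}(t_0,1)=\gamma(t_0)$, then invoke $R(t_0,\pm 1)=0$ for the failure of the tangent plane. The only difference is that you make explicit what the paper leaves implicit — that $R_\mu(t_0,\pm1)=\mp 2B(t_0)\neq 0$ forces a genuine blow-up of $\beta_\mu$ (and hence of $\mathfrak{B}^{\pm}_\mu$) at the boundary $R=0$ — which is a welcome sharpening, not a different argument.
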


\begin{proof}

Let $\gamma(t_0)$ be a lightlike point of $\gamma$.  Analysing the expression of the spherical  surface  $\mathfrak{B}^{\pm}$ we have   $\mathfrak{B}^{\pm}(t_0,1)=\gamma(t_0)$, because  $\beta(t_0,1)=0$ and  $\lambda(t_0,1)=0$. Since   $R(t_0,1)=0$, then the  tangent planes to the spherical  surface at $\mathfrak{B}^{\pm}(t_0,1)$ are not defined.
\end{proof}

We observe that $\mathfrak{B}^{+}(t_0,-1)=\mathfrak{B}^{-}(t_0,-1)=-\gamma(t_0)$ and the bifurcation set of $\gamma$ and $-\gamma$ are the same. Furthermore $R(t_0,-1)=0$, then the tangent planes to the spherical  surface $\mathfrak{B}^{\pm}$  also are not defined at these points. Then, we have the next result.

\begin{prop}
The $LD$ set of the spherical  surface  $\mathfrak{B}^{\pm}$ are the curves $\mathfrak{B}^{\pm}(t_0,\mu)$, $-1<\mu<1$.
\end{prop}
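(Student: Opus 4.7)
The plan is to mimic the strategy of Proposition~\ref{prop:curvaB} and Theorem~\ref{LD} from the Minkowski--space setting, adapted to the spherical situation. The key observation is that the curve $\mathfrak{B}^{\pm}(t_0,\mu)$ with $-1<\mu<1$ separates two portions of the spherical focal surface: on one side $\gamma$ is spacelike, and by Proposition~\ref{prop:gspherical}(b) the surface $\mathfrak{B}^{\pm}$ is timelike there; on the other side $\gamma$ is timelike, and by Proposition~\ref{prop:hspherical}(b) the surface $\mathfrak{B}^{\pm}$ is spacelike there. By continuity of the induced metric, the tangent planes must become lightlike along the interface, so the $LD$ set is \emph{contained} in $\{\mathfrak{B}^{\pm}(t_0,\mu): -1<\mu<1\}$. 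The endpoints $\mu=\pm 1$ are already excluded because the previous proposition shows that the tangent plane is not defined there (as $R(t_0,\pm 1)=0$ makes the square root in $\beta$ non-smooth).

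For the reverse inclusion, I would verify directly that for each $\mu\in(-1,1)$ the tangent plane at $\mathfrak{B}^{\pm}(t_0,\mu)$ is in fact lightlike. First I would compute $\mathfrak{B}^{\pm}_t(t_0,\mu)$ and $\mathfrak{B}^{\pm}_\mu(t_0,\mu)$, exploiting the simplifications produced by $\langle\gamma'(t_0),\gamma'(t_0)\rangle=0$ to reduce $\beta(t_0,\mu)$ and $\lambda(t_0,\mu)$ to manageable expressions in $\mu$ and the fixed coefficients $A(t_0)$, $B(t_0)$. Next I would check that these two partials are linearly independent for $-1<\mu<1$ (this is where the strict inequality is essential, since at $\mu=\pm 1$ they fail to be defined) so that the tangent plane really exists.

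Having a basis, I would write a general tangent vector $v=\lambda_1\mathfrak{B}^{\pm}_t(t_0,\mu)+\lambda_2\mathfrak{B}^{\pm}_\mu(t_0,\mu)$, expand $\langle v,v\rangle$ as a quadratic form in $(\lambda_1,\lambda_2)$, and examine its discriminant $\Delta$ with respect to (say) $\lambda_1$. The claim reduces to $\Delta(t_0,\mu)\equiv 0$ for $\mu\in(-1,1)$, which would show the quadratic form is a perfect square and hence degenerate, with a one-dimensional null direction---precisely the condition that the induced metric is degenerate. This step is the parallel of the computation at the end of Proposition~\ref{prop:curvaB}(a).

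The main obstacle I expect is purely algebraic: the explicit formulas for $\beta$ and $\lambda$ are cumbersome, so computing $\mathfrak{B}^{\pm}_t(t_0,\mu)$ and the resulting discriminant requires careful book-keeping of the inner products $\langle\gamma'',N\rangle$, $\langle\gamma'',E\rangle$, $\langle N,N\rangle$, $\langle E,E\rangle$ at $t_0$, together with the relations $A(t_0)=-B(t_0)$ and $\langle\gamma(t_0),\gamma''(t_0)\rangle=-\langle\gamma'(t_0),\gamma'(t_0)\rangle=0$ (a spherical analogue of Remark~\ref{obs:denominador}). A clean way to avoid the messiest computation would be to argue by continuity as in the first paragraph and only verify $\Delta\equiv 0$ at the single interior point $\mu=0$ together with the behavior near the endpoints; but a direct check along the whole segment is cleaner and follows the template already established for the surface $\mathfrak{B}$ in $\mathbb{R}^3_1$.
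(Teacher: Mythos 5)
Your proposal is correct, and your first paragraph is in fact essentially the whole of the paper's proof: the paper simply observes that the tangent planes along $\mathfrak{B}^{\pm}(t_0,\mu)$ exist for $-1<\mu<1$ and that the surface $\mathfrak{B}^{\pm}$ decomposes as the union of the spherical focal surfaces of the spacelike and timelike parts of $\gamma$ together with these curves; since by Propositions \ref{prop:gspherical}(b) and \ref{prop:hspherical}(b) those two pieces are timelike and spacelike respectively, continuity of the induced metric forces degeneracy along the interface. Note, though, that your logic is slightly inverted: the inclusion of the $LD$ set in $\{\mathfrak{B}^{\pm}(t_0,\mu)\,:\,-1<\mu<1\}$ follows directly from the metric-structure propositions (away from the interface the tangent planes are timelike or spacelike, and on the spherical cuspidal curves and at $\mu=\pm1$ they are not defined), while it is the \emph{reverse} inclusion that the continuity argument delivers --- a tangent plane at an interface point is a limit of timelike planes from one side and of spacelike planes from the other, so it can be neither nondegenerate type, hence is lightlike. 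Consequently the direct discriminant computation you outline in your second and third paragraphs (the analogue of Proposition \ref{prop:curvaB}(a)) is not needed and is not carried out in the paper, as you half-suspect in your closing remark. It would, however, buy something the paper's terse argument does not: an explicit check that $\mathfrak{B}^{\pm}_t(t_0,\mu)$ and $\mathfrak{B}^{\pm}_\mu(t_0,\mu)$ are independent on the whole open segment (the paper merely asserts the tangent planes exist there) and an explicit formula for the unique lightlike direction, at the cost of the heavy bookkeeping with $\langle\gamma'',N\rangle$, $\langle\gamma'',E\rangle$, $\langle N,N\rangle$, $\langle E,E\rangle$ that you anticipate.
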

\begin{proof}
 The  tangent planes at $\mathfrak{B}^{\pm}(t_0,\mu)$ exist for $-1<\mu<1$. The proof follows observing that the spherical surface $\mathfrak{B}^{\pm}$  is the union of the spherical focal surface of the spacelike and timelike part of $\gamma$, with the curves  $\mathfrak{B}^{\pm}(t_0,\mu)$, $-1<\mu<1$.

\end{proof}




{\small
\par\noindent
Ana Claudia Nabarro, Andrea de Jesus Sacramento,  Departamento de Matem\'{a}tica,
ICMC Universidade de S\~{a}o Paulo, Campus de S\~{a}o Carlos, Caixa Postal 668, CEP 13560-970, S\~{a}o Carlos-SP, Brazil
\par\noindent
e-mail:{\tt anaclana@icmc.usp.br }

\par\noindent e-mail:{\tt andreajs@icmc.usp.br}

}

\end{document}